\newcommand{\dis}{\displaystyle}
\newcommand{\eps}{\varepsilon}
\newcommand{\ph}{\varphi}
\newcommand{\alali}{$\mbox{}$\\}
\newcommand{\bqr}{\begin{eqnarray}}
\newcommand{\bqre}{\begin{eqnarray*}}
\newcommand{\eqr}{\end{eqnarray}}
\newcommand{\eqre}{\end{eqnarray*}}
\newcommand{\ue}{u_\eps}
\newcommand{\Ue}{U_\eps}
\newcommand{\ve}{v_\eps}
\newcommand{\Ve}{V_\eps}
\newcommand{\We}{W_\eps}
\newcommand{\Be}{B_\eps}
\newcommand{\Ne}{N_\eps}
 \newcommand{\F}{{\bf F}}  
 \newcommand{\G}{{\bf G}}
  \newcommand{\y}{{\bf y}}
  \newcommand{\vv}{{\bf v}}
\newcommand{\h}{{\bf h}}
\newcommand{\bpro}{  { \rm \bf Proof : }}
\newcommand{\epro} {\leavevmode
  \hbox to.77778em{%
  \hfil\vrule
  \vbox to.675em{\hrule width.6em\vfil\hrule}%
  \vrule\hfil} \\}
\newcommand{\R}{\mathbb{R}}
\newcommand{\Z}{\mathbb{Z}}
\newcommand{\N}{\mathbb{N}}
\newcommand{\del}{{\partial}}
\renewcommand{\a}{{\bf a} } 
\renewcommand{\t}{\theta}
\newcommand{\g}{\gamma}
\newcommand{\M}{\mathcal M}
\renewcommand{\div}{\text{\rm div}\,}
\def\al{ \alpha}
\def\eps{\varepsilon}
\def\div{\mbox{div}}
\def\x{{\bf x}}
\def\F{{\bf F}}
\def\G{{\bf G}}
\def\inf{\hbox{\rm inf}}
\def\sup{\hbox{\rm sup}}
\def\M1{{\cal M}_1}
\theoremstyle{plain}
\newtheorem{proposition}{Proposition}[section]
\newtheorem{thm}{Theorem}[section]
\newtheorem{lem}{Lemma}[section]
\newtheorem{dfni}{Definition}[section]
\theoremstyle{definition}
\theoremstyle{remark}
\newtheorem{remark}{Remark}[section]
\numberwithin{equation}{section}
\begin{document}

\title[  High frequencies and   regularity  for  conservation laws
        ]
{    High frequency waves  and
    \\    the maximal  smoothing  effect 
\\ for nonlinear scalar conservation laws }
   
 \author{St\'{e}phane Junca }
  \date{\today}

 \address[St\'{e}phane Junca]{
 Laboratoire J.A. Dieudonn\'e
 Universit\'{e} de Nice Spohia-Antipolis, UMR CNRS  7351,Parc Valrose, 
 06108, Nice, France.
 }
\address[ St\'{e}phane Junca ]{
Team COFFEE, INRIA Sophia-Antipolis  M\'editerran\'ee,  2004  Routes des Lucioles -BP 93,  06902 Sophia-Antipolis, France}
 \email{junca@unice.fr}


\bigskip
\begin{abstract}
The article first studies  the propagation of well prepared  high frequency waves with small amplitude $\eps$  near constant solutions for entropy solutions of multidimensional nonlinear scalar conservation laws. 
    Second,  such oscillating solutions  are used to highlight a conjecture of Lions, Perthame, Tadmor,  (\cite{LPT}), 
  about the maximal regularizing effect for nonlinear conservation laws. 
For this purpose,   a new   definition  of   smooth nonlinear   flux
 is stated  and compared to classical definitions.   
 Then    it is proved  that the uniform smoothness expected by \cite{LPT} in Sobolev spaces  cannot be  exceeded for all smooth nonlinear fluxes.
\end{abstract}
 \maketitle

{\small
\noindent{\bf Key-words}:  multidimensional conservation laws, nonlinear smooth flux, 
  geometric optics,
 Sobolev spaces, smoothing effect.
\medskip

\noindent {\bf Mathematics Subject Classification}:
\\
   Primary: 35L65, 35B65; Secondary: 35B10, 35B40,  35C20.
}

\tableofcontents

\section{Introduction}\label{sI}
\smallskip
 
 This paper deals with super critical  geometric optics to highlight the maximal regularizing effect for nonlinear multidimensional scalar
conservation laws. This   effect  is   studied  in Sobolev spaces  by P.L. Lions,  B. Perthame and  E. Tadmor in  \cite{LPT}. 
They obtain an uniform  fractional Sobolev bounds for any ball of $L^\infty$ initial data under a non linearity condition on the flux.
They also conjectured a better Sobolev exponent.  
 In this framework, we can prove that the Sobolev exponent conjectured in \cite{LPT} cannot be exceeded.  
Indeed, we construct  a sequence of smooth solutions which are exactly uniformly bounded in the Sobolev space conjectured in \cite{LPT}. The uniform Sobolev estimate of this  sequence blows up in all more regular Sobolev spaces.

Notice that  we look for the best {\it uniform } Sobolev exponent for a set of solutions.    
 The smoothness of any individual solution is not studied in this paper.  
 This  point is  discussed  later. 
 
 A very important point to note here is the definition of nonlinear flux. 
Indeed, there are various  definitions (\cite{EE,LPT,CJR,COW}).
In \cite{LPT} they give  well known Definition \ref{defLPT} below and a conjecture about 
the maximal smoothing effect in Sobolev spaces related to the parameter ``$\alpha$``  from their definition. The study of periodic solutions leads to another definitions \cite{EE,CJR}.
We obtain  new  Definition \ref{defgnf} for smooth  flux. It 
  generalizes the definition of  \cite{CJR}.
 For smooth flux, our definition  is equivalent to  classical Definition \ref{defLPT}.
  Definition   \ref{defgnf} gives a way to compute the 
parameter  ``$\alpha$''.  This new definition also shows  that smoothing effects 
for scalar conservation laws  depend on the space dimension. 

To be more precise, the smoothing effect   and the related conjecture in the Sobolev framework are recalled in Subsection  \ref{ssse}.
Sobolev spaces are not sufficient to describe all the properties of the solutions. 
Some comments are given in Subsection   \ref{sssmasb}  for  other approaches.
Finally the outline of the paper are given to close the introduction. 

\subsection{The smoothing effect in Sobolev spaces}\label{ssse}
\smallskip

 \alali
 We look for   Sobolev bounds  for entropy solutions $u(.,.) $ of 
\bqr\label{1.1}
\del_t u + \div_\x \F(u) & = & 0,
\eqr
where
$ t \in [0,+\infty[$,
$\x\in \R^d $, $ u: [0,+\infty[_t\times \R^d_\x \to \R$, 
 $\F: \R \to \R^d$ is a smooth flux function, $\F \in C^\infty(\R,\R^d)$,
 and the  initial data are only bounded in $L^\infty(\R^d_\x,\R)$:
\bqr\label{1.2}
u(0,\x) & = & u_0(\x).
\eqr
Let $\a (u)$ be $\F'(u)$. 
Obviously, when $\F$ is linear: $ \a(u)=\a$ where $\a$ is  a constant vector, 
 $u(t,\x)=u_0(\x - t \, \a)$, there is no smoothing effect.
In \cite{LPT}, it  was first proved a regularizing effect  for{\it  nonlinear} multidimensional  flux $\F$. 
The  sharp measurement of the non-linearity plays a key role in our study. 
Let us recall the classical definition for nonlinear flux  from  \cite{LPT}.
\begin{dfni}{\bf [ Nonlinear flux \cite{LPT}]} \label{defLPT}
\\  Let $M$ be  a positive constant, 
   $\F: \R \rightarrow \R^d$ is said to be  
{\rm nonlinear} on $[-M,M]$  if  
 there exist $ \alpha >  0$ and $C=C_\alpha >0 $ 
 such that for all $\delta >0$
%
%
%
\bqr
   \sup_{\substack{\tau^2+ |\xi|^2=1 }} | W_\delta(\tau,\xi) | & \leq  & C \; \delta^\alpha, \label{defal}   
\eqr
where  $(\tau,\xi) \in S^{d} \subset \R^{d+1}, 
             \mbox{ i.e. } \tau^2+ |\xi|^2=1$,
 and 
  $ | W_\delta(\tau,\xi) | $  is the one dimensional measure 
 of the singular set:
\\  $ W_\delta(\tau,\xi)   := 
        \{|v| \leq  M, |\tau + \a(v)\centerdot \xi| \leq \delta \} 
      \quad \subset \quad [-M,M] $    and   $\a=\F'$.
\end{dfni}
 Indeed, $ W_\delta(\tau,\xi)$ is  a neighborhood of the cricital  value $v$   for the symbol of
 the linear operator  $\mathcal{L}[v]$ in the Fourier direction $(\tau,\xi)$ 
  where 
   $\mathcal{L}[v] = \partial_t + \a(v)\centerdot \nabla_x$. 
  The symbol in this direction is: $i \left(  \tau + \a(v)\centerdot \xi \right)$. 
  This operator  is simply related  to  any smooth solution  $u $ of equation \eqref{1.1}
 by the chain rule formula: 
 $$ \del_t u + \div_\x \F(u)  =\partial_t u + \a(u) \centerdot \nabla_x u = \mathcal{L}[u]  u .$$

$\alpha$ is a degeneracy measurement of the operator $\mathcal{L}$ parametrized by $v$.
 $\alpha$ depends only on the flux $\F$ and the compact set $[-M,M]$: $\al = \al[\F,M]$.
 In the sequel we denote by
   \bqr \label{alsup}
   \al_{\sup} & = & \al_{\sup}[\F,M],  \qquad \mbox{  the supremum of  all $\alpha$ 
 satisfying  \eqref{defal}.}
 \eqr
 $\al$,  or more precisely  $\alpha_{\sup}$, is a  key parameter to describe  the sharp smoothing effect
 for entropy solutions of nonlinear scalar conservation laws. 
For smooth flux the parameter $\al$ always  belongs to $[0,1]$,
 for instance: $\alpha_{\sup} =0$ for a linear flux,
 $\alpha=1$ for strictly  convex flux in dimension one.
 For the first time 
$\alpha_{\sup}$ is characterized  below in section \ref{sAFN}.  
Indeed, for smooth nonlinear flux,   
$\dis \dfrac{1}{\alpha_{\sup}}$  is always an integer greater or equal to the space dimension. 
\\

In all the sequel we assume that $M  \geq \|u_0\|_\infty$ and the flux $\F$ is nonlinear 
 on $[-M,M]$: 
  \bqr \label{al+}
                \alpha_{\sup} & > & 0.       
  \eqr  
When nonlinear  condition \eqref{al+} is true,  
the entropy solution operator associated  with the nonlinear 
conservation law \eqref{1.1}, \eqref{1.2}, 
 \bqre  
        \begin{array}{cccc} \mathcal{S}_t:
                          & L^\infty(\R^d_\x,[-M,M]) & \to & L^\infty(\R^d_\x,[-M,M]) \\
                           &  u_0(.)  & \mapsto & u(t,.),
        \end{array}
 \eqre
 has a regularizing effect:    mapping 
$L^\infty(\R^d_\x,[-M,M])$ into $W^{s,1}_{loc}(\R^d_\x,\R)$ 
for  all $t>0$.
\\
In \cite{LPT}, they proved this regularizing effect for all 
$s \dis < \frac{\alpha}{2+\alpha }$. 
\\
 In \cite{TT} the result is improved for all
 $ s  \dis < \frac{\alpha}{1+2\alpha }$
 under a generic assumption on $\a'=\F''$. 
 
Lions,  Perthame and Tadmor conjectured in 1994 
  a better  regularizing effect, (\cite{LPT}, remark 3, p .180, line 14-17). 
 In \cite{LPT},   they   proposed an optimal  bound  $ s_{\sup} $ for  Sobolev exponents of  entropy solutions:  
\bqr  \label{conjLPT}
          s_{\sup} &=&\alpha_{\sup}.
 \eqr
 That is to say  that $u(t,.)$ belongs in all $W^{s,1}_{loc}(\R^d,\R)$ for all $s < s_{\sup}=\alpha_{\sup}$ and for all $t>0$.
\\
 The  shocks formation implies   $ s < 1$ and $s_{\sup} \leq 1$ since $W^{1,1}$ functions
  do not have shock. 
\\

A main result of the paper is  to  give an insight of the conjecture  \eqref{conjLPT} by bounding  the uniform Sobolev smoothing 
effect $s_{\sup}$ for the whole set of  entropy solutions with initial data  bounded by $M$ in $L^\infty$ : 
 \bqr  \label{goal}
          s_{\sup} &\leq&\alpha_{\sup}.
 \eqr

Some results highlight the conjecture \eqref{conjLPT} or the inequality \eqref{goal} in the one dimensional case.
 But for the multidimensional case and for all smooth fluxes, our examples are new. 
\\

\underline{One dimensional case}:

  In one dimension  (d=1) and for uniformly convex flux  it is well known from Lax and Oleinik  that the entropy  solution 
becomes $BV$, (\cite{O,Laxbook}).    Conjecture \eqref{conjLPT} is true in this case
 since  for all $t>0$,  $u(t,.)$ belongs to  $W^{s,1}_{loc}$ for all $s< 1$. 
 In this case we have    conjecture \eqref{conjLPT}  which is simply: $s_{\sup}=1 =\alpha_{\sup} $. 

 For power law flux: $F(u)=|u|^{1+p}$,
 De Lellis and Westdickenberg  built    entropy piecewise smooth solutions   and proved   \eqref{goal}
  (\cite{DW},  Proposition 3.4 p. 1085). 
  For all  one dimensional nonlinear smooth   fluxes,   new continuous  examples   are  also given in \cite{CJ1}.
Both examples are only justified  for a bounded   time interval. 
 
 Recently,   for more general  convex fluxes   the regularity   $s_{\sup}=\alpha_{\sup}$ 
is reached in $W^{s,1}$   (\cite{JaX09}) and also $W^{s,1/s}$  (\cite{BGJ6}).  The proofs need a generalized Oleinik condition.
 \\ 

 For the  class of solutions   with bounded entropy production,  the optimal smoothing effect is proved in  \cite{DW, GP}.
 In \cite{GP} the result  is restricted  for uniform convex flux and in the one dimensional case. 
 This class of solution is larger than the class of entropy solutions. 
For instance,  the uniqueness of solution  for initial value problem  \eqref{1.1}, \eqref{1.2} is not true in general .
Thus the smoothing effect expected is smaller:
 the optimal Sobolev exponent  for uniform convex flux  is only $s=1/3$, (\cite{DW,GP}), instead of $s=1$ for entropy solutions.
\\

\underline{Multidimensional case}:

 For the first time,   the multidimensional case  is investigated to highlight inequality \eqref{goal}. 
  Furthermore, all smooth  nonlinear fluxes are considered in this paper.
 Examples of family of solutions exactly uniformly bounded in $W^{s,1}_{loc}$ 
 with the conjectured maximal exponent  $s=\al_{\sup}$ and  with no improvement of the Sobolev exponent. 

 High frequency periodic solutions  of \eqref{1.1}  are used for this purpose 
 Near a constant state and for $L^\infty$ data,   geometric optics expansions with various frequencies  and various phases  are validated   
in the framework of weak entropy solutions   and  of $L^1_{loc}$ convergence  in \cite{CJR}.    
Here, results of \cite{CJR} are  specified  in $C^1$ for a well chosen phase and proved for  a particular  
 sequence of  smooth solutions 
(without shocks on a strip). 
This allows to  give   a proof of  inequality  \eqref{goal} for  the  ball of $L^\infty$ initial data: $L^\infty(\R^d,[-M,M])$

\subsection{Other approaches for the smoothing effect}\label{sssmasb}
\smallskip
\alali

The  maximal Sobolev exponent is not sufficient 
 to get  all the properties of  entropy solutions. 
 Other relevant ways are indicated.  

In  the 50', Oleinik (\cite{O}) obtained her famous one-sided Lipschitz condition.
 This condition ensures the uniqueness and the $BV$ regularity of the  entropy solution. 
This is the first basis and the proof  of  conjecture \eqref{conjLPT} for one dimensional uniformly convex flux.
Dafermos (\cite{Da85,Da}), with his generalized characteristics,  handled convex and some non convex fluxes. 
Hoff  extended this one-sided condition in several space variables (\cite{Hoff}) but related to a  convex assumption on the flux.
The generalized Oleinik  condition is the key assumption   to prove the best  $W^{s,1}$ smoothing effect 
 in \cite{JaX09}. 
The maximal $W^{s,p}$ smoothing effect is proved in \cite{BGJ6} with 
a one-sided Holder condition and fractional $BV$ spaces, see remark \ref{rind} below.
For a  recent generalization of Oleinik condition for a flux with one inflection point,  we refer the reader to  \cite{JS}.

In the 90', the kinetic formulation of conservation laws (\cite{LPT}) gave another approach. 
It began in 2000  (\cite{Ch00}). 
Some  trace properties were obtained in  \cite{V,DOW,DR,COW}.
These structure of a $BV$  function  for solutions  cannot be given by Sobolev regularity. 
These results are  indeed valid for solutions with bounded entropy production. 
Thus this method necessary  misses some other properties of entropy solutions.   
\\

    The paper is organized as follows.
   In section \ref{sHFW}  
  examples of  highly oscillating solutions are expounded under  new  orthogonality conditions between the flux derivatives and the phase gradient. 
    In section \ref{sAFN},     these orthogonality conditions lead to a new  definition of nonlinear smooth flux.
                 The concept of flux non-linearity is clarified , characterized and compared with other classical definitions. 
Section   \ref{sSE}  is devoted to get optimal Sobolev estimates on 
   oscillating solutions built in section \ref{sHFW}. It is a quite technical part.
  Finally, the section \ref{scgo}  gives the super critical geometric optics expansion with the highest frequency related 
  to the geometric structure of the nonlinear flux. 
        This family of  high frequency waves   highlights   inequality  \eqref{goal}. 
  But the conjecture \eqref{conjLPT} is still an open problem.
 
 
\section{High frequency waves with small amplitude  \label{sHFW}}
\smallskip
 The section \ref{sHFW} deals with highly  oscillating initial data 
  near a constant state:
\begin{equation}\label{1.2os}
\ue(0,\x)=u_0^\eps(\x):=\underline{u}
 +\eps \dis U_0\left( \frac{\vv \cdot \x}{\eps^{\gamma}}\right),
\end{equation}
where $U_0(\theta)$ is a one periodic function w.r.t. $\theta$, $\gamma >0$,  
$\underline{u}$ is a constant ground state, $\underline{u} \in [-M,M]$, 
$\vv \in \R^d $.
The case  $\g=1$ 
is the  classical geometric optics for scalar conservation laws,
  (\cite{DM}). 
In this paper we focus on {\it critical oscillations}
  when $ \gamma > 1$.

The aim of this section is to understand when such high frequency are propagated  or not propagated.
 As we will see, it depends on  new compatibility conditions between the phase and the flux \eqref{Hgamma}.

One of the two following  
asymptotic expansions  (\ref{bsurcri}) or  (\ref{bkill}),
is 
expected   in $ L^1_{loc}(]0,+\infty[ \times \R^d,\R) $ 
 for  the entropy-solution 
 $\ue$ of conservation law  \eqref{1.1} with highly oscillating data 
 \eqref{1.2os}
 when $\eps$ goes to $0$, 
\bqr \label{bsurcri}
      \ue(t,\x) & =&  
\underline{u}+\eps U\left (t,  \frac{\phi(t,\x)}{\eps^\g} \right)
 + o(\eps)
   \\
 \label{bkill}
 \mbox{ or }\qquad
 \ue(t,\x)  &=&  
\underline{u}
 +\eps \overline{U}_0 
 + o(\eps) , 
\eqr
where  the profile $U(t,\theta)$ 
 satisfies a conservation law 
with initial data $U_0(\theta)$,
 $\dis 
   \overline{U}_0= \int_0^1 U_0(\theta)d\theta$ 
and
the phase $\phi$ satisfies the  eikonal equation:
\bqr \label{eqphase}
     \del_t \phi + \a(\underline{u}) \centerdot  \nabla_\x \phi & = 0,
   &
  \phi(0,\x)= \vv \cdot \x.
\eqr 
 Thus the phase is simply a linear phase: 
 \bqr \label{eqphaselin}
   \nonumber 
   \phi(t,\x) & = & \vv \cdot ( \x  -  t \; \a(\underline{u}) ).
\eqr 
 
 The propagation of such oscillating data is obtained under 
 the crucial  compatibility condition (\ref{Hgamma}) below.
 Otherwise, when the  the compatibility condition (\ref{Hgamma}) 
 is nowhere satisfied, the nonlinear semi-group 
 associated with  equation (\ref{1.1}) 
 cancels these too high oscillations, see Theorem \ref{Tptgamma}.
  The  validity or  invalidity   of assumption \eqref{Hgamma} is a key point  related  to the nonlinearity of the  flux (section \ref{sAFN}).
\begin{thm}{\bf [Propagation of  smooth high oscillations]}\label{Tpgamma}
 \alali 
  Let $\gamma$ belong to $]1,+\infty[$
  and let   $q$  be the integer  such that $q-1 < \gamma \leq q$.
\\
 Assume $\F$ belongs to $C^{q +3}(\R,\R^d)$,  
 $U_0 \in C^1(\R/\Z,\R)$, $\vv \neq (0,\cdots,0)$ and  
 \bqr \label{Hgamma}
  \dis  \a^{(k)}(\underline{u})\centerdot \vv = 0,
 &   \qquad k=1,\cdots,q-1 
 \eqr
then there exists $T_0>0$ such that, for all $\eps \in ]0,1]$,
the  solutions of conservation law \eqref{1.1} with initial oscillating data 
 \eqref{1.2os} are smooth on $[0,T_0]\times \R$ and
\bqre
 \ue(t,\x) = 
\underline{u}+\eps U\left (t,  \frac{\phi(t,\x)}{\eps^\gamma} \right)
 + \mathcal{O}(\eps^{1+r}) \mbox{ in }  C^1([0,T_0]\times \R^d),  
\eqre
where 
$ \dis  0 < r= \left \{ \begin{array}{ccc}
     1 & if & \gamma = q, \\
     q-\gamma & else, &   
   \end{array}
   \right.
$
 and the smooth profile $U$ is uniquely determined by the Cauchy problem    \eqref{eqUpgamma},  
  $\phi$ is given by the eikonal equation \eqref{eqphase}:  
\begin{eqnarray} 
\label{eqUpgamma}
 \frac{\partial U }{\partial t} + b \frac{\partial  U^{q+1}  }{\partial \theta} = 0,  
 & \quad &
  U(0,\theta)  = U_0(\theta), 
\end{eqnarray}
with  $    \dis  b= \left \{ \begin{array}{ccc}
    \frac{1}{(q+1)!}  \left( {\a^{(q)} (\underline{u})\centerdot \vv}   \right) 
    & if & \gamma = q, \\
    0 & else. &   
   \end{array} \right.$.
\end{thm}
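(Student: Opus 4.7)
I would validate the WKB ansatz
\begin{equation*}
u_\eps^{\mathrm{app}}(t,\x)\;=\;\underline{u}+\eps\,U\!\left(t,\,\tfrac{\phi(t,\x)}{\eps^\gamma}\right),
\end{equation*}
with the linear phase $\phi(t,\x)=\vv\cdot(\x-t\,\a(\underline{u}))$ solving the eikonal equation \eqref{eqphase}. Setting $\theta=\phi/\eps^\gamma$, the chain rule gives $\partial_t u_\eps^{\mathrm{app}}=\eps\,\partial_t U-\eps^{1-\gamma}(\vv\cdot\a(\underline{u}))\partial_\theta U$ and $\nabla_\x u_\eps^{\mathrm{app}}=\eps^{1-\gamma}\vv\,\partial_\theta U$. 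Plugging into the equation in non-conservative form $\partial_t u+\a(u)\cdot\nabla_\x u=0$, Taylor expanding $\a(\underline{u}+\eps U)$ around $\underline{u}$ up to order $q+1$, and sorting by powers of $\eps$, the $\eps^{1-\gamma}$ contribution is cancelled by the eikonal equation while the orthogonality hypothesis \eqref{Hgamma} wipes out each intermediate term of order $\eps^{2-\gamma},\dots,\eps^{q-\gamma}$. What survives is
\begin{equation*}
\partial_t u_\eps^{\mathrm{app}}+\a(u_\eps^{\mathrm{app}})\cdot\nabla_\x u_\eps^{\mathrm{app}}\;=\;\eps\bigl[\partial_t U+b\,\partial_\theta U^{q+1}\bigr]+\mathcal{O}(\eps^{1+r}),
\end{equation*}
with $b$ exactly as stated (and $b=0$ when $\gamma<q$, the next surviving term then carrying the extra factor $\eps^{q-\gamma}$, whence $r=q-\gamma$; when $\gamma=q$ the next term is of order $\eps^2$, whence $r=1$). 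Demanding the bracket to vanish is precisely the profile equation \eqref{eqUpgamma}.

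\textbf{Profile and uniform strip of classical existence for $\ue$.} Equation \eqref{eqUpgamma} is a one-dimensional scalar conservation law with smooth periodic data, so the classical Lax--Oleinik theory provides a lifespan $T_\sharp>0$, depending only on $b$, $q$ and $\|U_0\|_{C^1}$, on which $U\in C^\infty([0,T_\sharp]\times\R/\Z)$ (the case $b=0$ is trivial and $T_\sharp$ is arbitrary). To construct $\ue$ as a classical solution on a strip $[0,T_0]\times\R^d$ with $T_0$ independent of $\eps$, note that although $\|\nabla_\x u_0^\eps\|_\infty=\mathcal{O}(\eps^{1-\gamma})$ diverges, the Lax--John shock-formation criterion is driven by $\a'(u_0^\eps)\cdot\nabla_\x u_0^\eps$. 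Taylor expanding $\a'$ about $\underline{u}$ gives
\begin{equation*}
\a'(u_0^\eps)\cdot\nabla_\x u_0^\eps\;=\;\sum_{j\ge0}\frac{\eps^{\,j+1-\gamma}}{j!}\,U_0'(\theta)\,U_0(\theta)^{j}\bigl(\a^{(j+1)}(\underline{u})\cdot\vv\bigr),
\end{equation*}
and \eqref{Hgamma} kills every summand with $j\le q-2$, leaving terms of size $\mathcal{O}(\eps^{q-\gamma})=\mathcal{O}(1)$. Hence characteristics do not cross before some uniform time $T_0>0$, and on $[0,T_0]\times\R^d$ (after shrinking $T_0\le T_\sharp$ if necessary) the entropy solution $\ue$ is the classical smooth solution.

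\textbf{$C^1$ error estimate and the main obstacle.} Setting $w_\eps=\ue-u_\eps^{\mathrm{app}}$, the difference satisfies $w_\eps(0,\cdot)=0$ together with
\begin{equation*}
\partial_t w_\eps+\a(\ue)\cdot\nabla_\x w_\eps+\bigl(B_\eps\cdot\nabla_\x u_\eps^{\mathrm{app}}\bigr)w_\eps\;=\;-\,r_\eps,
\end{equation*}
where $B_\eps=\int_0^1\a'(u_\eps^{\mathrm{app}}+sw_\eps)\,ds$ and $r_\eps=\mathcal{O}(\eps^{1+r})$ is the residual from Step 1. The same telescoping via \eqref{Hgamma} used above forces $B_\eps\cdot\nabla_\x u_\eps^{\mathrm{app}}=\mathcal{O}(1)$ in spite of the apparent factor $\eps^{1-\gamma}$, so the method of characteristics immediately yields $\|w_\eps\|_{L^\infty}=\mathcal{O}(\eps^{1+r})$. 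The hard part, and what I expect to be the main obstacle, is upgrading to $C^1$: differentiating the equation introduces a coefficient $\a'(\ue)\,\partial_{x_i}\ue$ of naive size $\eps^{1-\gamma}$ which, without care, would produce exponential blow-up in Gronwall. The cure I would adopt is to augment the ansatz with successive correctors $\eps^{1+r}U^{(1)}+\eps^{1+2r}U^{(2)}+\cdots$, each solving a linearized profile equation, thereby depressing the residual to arbitrarily high order; the compatibility identities \eqref{Hgamma} guarantee that every dangerous $\eps^{1-\gamma}$ factor appearing in the linearized coefficients is multiplied by some $\a^{(k)}(\underline{u})\cdot\vv$ and is therefore cancelled. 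A Gronwall argument on the resulting bounded-coefficient transport system for $(w_\eps,\nabla_\x w_\eps)$ then delivers the claimed $C^1([0,T_0]\times\R^d)$ bound $\mathcal{O}(\eps^{1+r})$.
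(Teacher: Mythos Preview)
Your strategy is the standard ``approximate solution + stability estimate'' route, and it is not wrong in spirit, but the paper takes a shortcut that eliminates the very obstacle you flag.  Instead of plugging in the limit profile $U$ and then wrestling with a residual, the paper writes the \emph{exact} solution in profile form,
\[
u_\eps(t,\x)=\underline{u}+\eps\,U_\eps\!\left(t,\tfrac{\phi(t,\x)}{\eps^\gamma}\right),
\]
and observes that this solves \eqref{1.1} identically provided $U_\eps$ satisfies a one-dimensional conservation law $\partial_t U_\eps+\partial_\theta\psi_\eps(U_\eps)=0$ with $\psi_\eps(U)=\eps^{q-\gamma}c_qU^{q+1}+\eps^{1+q-\gamma}g^\eps_q(U)$, the same WKB/Taylor cancellations you carried out producing this $\psi_\eps$.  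There is therefore \emph{no residual} $r_\eps$ and \emph{no stability estimate in $(t,\x)$}.  The uniform lifespan is obtained for $U_\eps$ (not $u_\eps$) by characteristics, since $\psi_\eps=\mathcal O(1)$ in $C^2$; this is equivalent to your Lax--John computation but done once and for all in the slow variables.  Finally, the asymptotic $U_\eps=U+\mathcal O(\eps^r)$ is a comparison of two \emph{one-dimensional} conservation laws with the same $C^1$ periodic data and $\mathcal O(\eps^r)$-close fluxes, which the method of characteristics handles directly in $C^1_{t,\theta}$.

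The payoff is precisely that your ``main obstacle'' never arises: all estimates live in $(t,\theta)$, where there is no fast scale, so no corrector hierarchy and no Gronwall with borderline coefficients are needed.  Your plan to add correctors $\eps^{1+r}U^{(1)}+\cdots$ would depress the residual, but note that any such term still has $\x$-gradient of size $\eps^{1+r-\gamma}$, so it does not by itself give an $\mathcal O(\eps^{1+r})$ bound in the literal $C^1_{t,\x}$ norm; the paper's own proof (see the displayed definition of the $C^1$ norm there) makes clear that the intended estimate is at the profile level, $\|U_\eps-U\|_{C^1_{t,\theta}}=\mathcal O(\eps^r)$, which is exactly what the later Sobolev lemmas consume.
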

We deal with smooth solutions to  compute later  Sobolev bounds.
Indeed,  the  asymptotic stays valid after shocks formation   and 
 for all positive time but in $L^1_{loc}$  instead of $L^\infty$ (\cite{CJR}).


 When $\g = 1$, we do not need  assumption  \eqref{Hgamma}.
 It is  the classic case for geometric optics (\cite{DM,R}). 

In dimension  $d\geq 2$,  
 it is always possible to find a non trivial vector $\vv$
 satisfying  \eqref{Hgamma}. 
 At least for $\g=2$, \eqref{Hgamma} is  reduced  to  find $\vv \neq 0$  such that 
 $\a'(\underline{u}) \centerdot \vv = 0$.
Thus, such singular solutions  always  exist in dimension greater than one. 
 But, for genuine nonlinear  one dimensional conservation law, there 
 is never such solution.  
Of course, we assume $U_0$  be a non constant function  and
 $\F$  be   a  nonlinear function  near $\underline{u}$,
 else the theorem is obvious.  Indeed, when  $U_0$ is constant,   $\ue$  is also constant. 
 When  $\F$ is linear on $[\underline{u} - \delta, \underline{u} + \delta]$
 for some $\delta > 0$, high oscillations propagate for all time  without any restriction  of the phase and  of  the frequency size.

In fact,  Theorem \ref{Tpgamma} expresses
  a kind of degeneracy 
of {\bf multi}dimensional scalar conservation laws.
 This degeneracy (period smaller than the amplitude)  appears for quasilinear systems
  whit some nonlinear degenerescence 
(see for instance \cite{CG}). 

 Notice that for $\g > 1$, smooth solutions exist for  larger time   than  
 it is currently  known \cite{Da,Laxbook}:
 $ T_\eps \sim 1/|\nabla_\x u_0^\eps| \sim \eps^{\g-1}$.
  Furthermore, equation \eqref{eqUpgamma} is nonlinear if and  only 
if $\g \in \N$ and $\a^{q}(\underline{u}) \centerdot \vv \neq 0$.
\\

\noindent
\bpro
First  one performs  
a WKB computations  with following ansatz: 
 \bqr
   \label{eqansatz} 
   \dis \ue(t,\x) = \underline{u} &+&
   \eps  \; \Ue\left (t,  \frac{\phi(t,\x)}{\eps^\gamma} \right).
\eqr 
Notice that we use for the proof the exact profile $\Ue$ as in \cite{Ju2}. 
It is a method to  sharply control the difference between  the exact solution 
and the geometric optics expansion: $\Ue$ and $U$. 
\\
The Taylor expansion of the flux and the remainder are:
\bqr
 \nonumber 
 \F(\ue) & =& \sum_{k=0}^{q+1} \eps^k \frac{\Ue^k}{k!} 
            \F^{(k)}(\underline{u}) +\eps^{q+2}\G^\eps_q(\Ue),
\\  \nonumber 
 \G^\eps_q(U) & =& U^{q+2}\int^1_0 \frac{(1-s)^{q+1}}{(q+1)!}
 \F^{(q+2)}(\underline{u}+s \eps U) ds, 
\\  \nonumber 
 g^\eps_q(U) & = & \vv. G^\eps_q(U).
\eqr
We now compute the partial derivatives with respect to time and space variables:
\bqr
 \nonumber 
  \del_t \Ue\left (t,  \frac{\phi(t,\x)}{\eps^\gamma} \right)  & =&   \del_t \Ue 
              - \eps^{-\gamma} (\a(\underline{u})\cdot \vv ) \del_\theta \Ue 
\eqr
\bqr
 \nonumber 
\div_\x \F(\ue)  & = &     
   \sum_{k=0}^{q} \eps^{k+1-\gamma} \frac{\del_\theta \Ue^{k+1}}{(k+1)!} 
            \a^{(k)}(\underline{u})\cdot \vv
        + \eps^{q+2} \div_\x \G^\eps_q(\Ue)
 \\  \nonumber 
       & = &
     \eps^{1-\gamma} (\a(\underline{u})\cdot \vv ) \del_\theta \Ue
   + \eps^{q+1-\gamma} c_q \del_\theta \Ue^{q+1}
   + \eps^{q+2-\gamma} \del_\theta g^\eps_q( \Ue),
   \eqr
where $ \dis c_q = \frac{ \a^{(q)}(\underline{u})\cdot \vv}{(q+1)!} 
            $.
Then  simplification yields
\bqr \label{eqve}
   \del_t \ue + \div_\x \F(\ue) 
 &
 =&   
\eps \left ( \del_t \Ue  + \eps^{q-\gamma} c_q \del_\theta \Ue^{q+1} 
        + \eps^{1+q-\gamma} \del_\theta g^\eps_q(\Ue) \right). 
 \eqr
It suffices to take $\Ue$ solution of the one dimensional scalar conservation laws
 with $\psi_\eps(U)= \eps^{q-\gamma} c_qU^{q+1} 
    + \eps^{1+q-\gamma}  g^\eps_q(U) $
\bqr 
 \label{eqUe}
\del_t \Ue  +   \del_\theta   \psi_\eps(\Ue) = 0, 
  & \qquad &
 \Ue(0,\theta)=U_0(\theta). 
\eqr
Notice that $\psi_\eps = O(1) \in C^2_{loc}$. 
For $\gamma <  q$, $\psi_\eps$ is even smaller:
$\psi_\eps = O(\eps^r) \in C^2_{loc}$.  
That is enough to prove the existence of a sequence of  smooth oscillating solutions on the same strip.
\\
\underline{Uniform life span for smooth solutions $(\Ue)_{0 < \eps \leq 1}$}:
\\
We use the method of characteristics  with $  \psi_\eps'(U) =  \frac{d}{d U}\psi_\eps(U) $: 
\bqre 
    \frac{d}{dt} \Theta(t,\theta) = \psi_\eps' (U_\eps ( t, \Theta(t,\theta)), & \quad & \Theta(0,\theta)=\theta.
\eqre
Since $U_\eps$ is constant along the characteristics, $\Theta(t,\theta)= \theta + t \psi_\eps' (U_0(\theta))$.
As soon as  the map $ \theta \rightarrow\Theta(t,\theta)$ is not decreasing no shock occurs. 
\bqre 
   \frac{\partial}{\partial \theta} \Theta (t,\theta)  & = &  1  +  t \psi_\eps''(U_0(\theta))  \frac{d}{d \theta} U_0 (\theta)
\eqre
The first shock appears  at the time $T_\eps$ when the right hand side vanishes. 
Let  $m_0 = \dis  \sup_{[0,1]} |U_0| > 0$,   
       $d_0 = \dis \sup_{[0,1]}   \left |   \frac{d}{d \theta} U_0   \right|  $,
        $m = \dis \sup_{0 < \eps\leq 1}  \sup_{  | U - \underline{u}| \leq m_0} | \psi_\eps''(U)|   $, 
\bqre 
       1/T_\eps & =&   \sup_{[0,1]} \left( -  \psi_\eps''(U_0(\theta))  \frac{d}{d \theta} U_0 (\theta) \right )
                      \leq   m \; d_0.  
\eqre
Of course, for constant initial data ($d_0=0$),  no shock occurs,  the solution is constant and $T_\eps = + \infty$. 
In general $m\;  d_0 \neq 0$, $T_\eps$ is finite   but  $ 0 < \inf_{0<\eps \leq 1}T_\eps $ 
since  $T_\eps  \geq   1/(m \; d_0)$ for all $0 < \eps \leq 1$. 

 This gives the existence of a positive time   $T_0 < T^*=\inf\{T_\eps, \; \eps \in]0,1]\} $ 
 such  that $\Ue \in  C^1([0,T_0]\times \R/\Z)$.
Thus  
 $\ue$, which is  well defined by \eqref{eqansatz}, 
belongs to  $ C^1([0,T_0]\times \R^d)$ for all $0 <\eps \leq 1$.
\\

Now we prove the $C^1$ convergence of the geometric optics expansion.
There are two cases: $\gamma $ is an integer or not.
\\
\underline{$q=\gamma$}: From \eqref{eqve} and \eqref{1.1} we get 
  \bqre
 \del_t \Ue  +   \del_\theta \left ( c_q \Ue^{q+1} 
        + \eps^{}  g^\eps_q(\Ue) \right)= 0 ,  
   & \qquad &
    \del_t U  +  c_q \del_\theta  U^{q+1} = 0, 
 \\
 \Ue(0,\theta)=U_0(\theta), &   & 
 U(0,\theta)=U_0(\theta). 
 \eqre     
  The  method of characteristics  gives $C^1$ characteristics, 
 $C^1$ solutions and 
\bqre 
  \|\Ue - U\|_{C^1([0,T_0]\times \R^d)} & = & O(\eps),
 \eqre 
 where $$\| U\|_{C^1([0,T_0]\times \R^d)}= \|U\|_{L^\infty([0,T_0]\times \R^d)}
 +\|\del_t U\|_{L^\infty([0,T_0]\times \R^d)} +\|\del_\t U\|_{L^\infty([0,T_0]\times \R^d)}.$$
\noindent
 \underline{ integer {$ q > \gamma $}}:  The proof is similar 
 except  the term $ \eps^r c_q \del_\theta \left ( c_qU^{q+1}  \right) $ becomes a remainder, 
with $r=q-\gamma$
 and $U(t,\t)=U_0(\t)$, 
 thus 
\bqre 
  \|\Ue(.,.) - U_0(.)\|_{C^1([0,T_0]\times \R^d)} & = & O(\eps^r),
 \eqre 
which concludes the proof. 
\epro

 When condition (\ref{Hgamma}) is violated,   
oscillations are  immediately canceled.
\begin{thm}{\bf [Cancellation of  high oscillations, \cite{CJR}]}\label{Tptgamma} \alali
 Let $\F$ belong to $ C^{q+2}$ and $U_0 \in L^\infty(\R/\Z,\R)$,
 where $q-1< \gamma \leq q$  where  $q$  is defined in Theorem \ref{Tpgamma}.
 If for some $  0 < j < q$ 
 \bqr \label{Htgamma}
\dis   \a^{(j)}(\underline{u})\centerdot \vv &\neq& 0  
 \eqr
then  the solutions $\ue$ of conservation law \eqref{1.1} 
with initial oscillating data 
 \eqref{1.2os} for $\eps \in ]0,1]$ satisfy when $\eps \rightarrow 0$
 \bqre
 \ue(t,\x)
 &  = &  
\underline{u}+\eps \overline{U}_0
 + \mbox{o}(\eps)   \qquad  \mbox{ in } L^1_{loc}(]0,+\infty[\times \R^d).  
\eqre
\end{thm}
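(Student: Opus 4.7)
The plan is to exploit the fast nonlinear mechanism triggered by the lowest non-vanishing directional derivative. Let $j_0 \in \{1,\dots,q-1\}$ be the smallest index with $\a^{(j_0)}(\underline{u})\cdot\vv \neq 0$; by hypothesis $j_0 \leq q-1 < \gamma$. First I would redo the WKB computation of Theorem \ref{Tpgamma} with the same ansatz $\ue(t,\x) = \underline{u} + \eps \Ue(t, \phi(t,\x)/\eps^{\gamma})$: the algebra is unchanged, but since the derivatives $\a^{(k)}(\underline{u})\cdot\vv$ vanish only for $k<j_0$, the dominant nonlinear contribution now appears at the order $\eps^{j_0-\gamma}$ with coefficient $c_{j_0} = \a^{(j_0)}(\underline{u})\cdot\vv/(j_0+1)!\neq 0$, while all higher-order terms are negligible in $\eps$.

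Second, rescale the fast time by setting $\tau = t/\eps^{\gamma-j_0}$ and $V_\eps(\tau,\theta) := \Ue(\eps^{\gamma-j_0}\tau,\theta)$. The profile equation then reduces, up to an $O(\eps^{\min(1,\gamma-j_0)})$ perturbation controlled in $L^\infty$, to the genuinely nonlinear one-dimensional scalar conservation law on the torus
\begin{equation*}
\partial_\tau V + c_{j_0}\, \partial_\theta V^{j_0+1} = 0, \qquad V(0,\theta) = U_0(\theta).
\end{equation*}
Because $\gamma > j_0$, every fixed macroscopic time $t>0$ corresponds to $\tau \to +\infty$ as $\eps \to 0$.

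Third, I would invoke the decay of periodic entropy solutions for genuinely nonlinear scalar laws in one space dimension (Lax--Dafermos type decay, valid in $L^1(\R/\Z)$ for $L^\infty$ data whenever the flux power is strictly greater than one): $V(\tau,\cdot) \to \overline{U}_0$ in $L^1(\R/\Z)$ as $\tau \to +\infty$. Composing with the linear phase $\phi(t,\x)=\vv\cdot(\x - t\,\a(\underline{u}))$ and undoing the rescaling yields the announced expansion in $L^1_{loc}(]0,+\infty[\times\R^d)$.

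The main obstacle is rigorously transferring the asymptotic behaviour of the model profile $V$ to the true entropy solution $\ue$, since $U_0$ is only $L^\infty$ and the remainder in the profile equation is not a smooth perturbation. I would handle this exactly as in \cite{CJR}: approximate $U_0$ by smooth periodic data for which the method of characteristics on the rescaled equation is licit, establish cancellation there via the decay estimate above, and then pass to the $L^\infty$ limit using the $L^1$-contraction and finite propagation speed of the Kru\v{z}kov semigroup $\mathcal{S}_t$ associated with \eqref{1.1}. An alternative, perhaps cleaner route would be to work directly with the kinetic formulation on the torus and use velocity averaging to obtain the compactness needed for the oscillations to be killed at any positive $t$.
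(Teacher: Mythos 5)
Your route is genuinely different from the paper's, and it has a gap that would need to be closed. The paper does not rescale time at all: it re-parametrizes the profile with the \emph{slower} phase frequency $\eps^{-j}$ (not $\eps^{-\gamma}$), writing $\ve(t,\x)=\We(t,\eps^{-j}\phi(t,\x))$, so that the exact profile $\We$ satisfies the one-dimensional law \eqref{eqWeps} on the unrescaled time interval, with the fast $\eps$-dependence pushed entirely into the initial datum $\We(0,\theta)=U_0(\eps^{-\beta}\theta)$, $\beta=\gamma-j>0$. Since $U_0$ is periodic this datum converges \emph{weakly} to $\overline{U}_0$; averaging lemmas (as in \cite{CJR}) give relative compactness of $(\We)$ in $L^1_{loc}(]0,\infty[\times\R)$, and then uniqueness of the Kru\v{z}kov solution with constant initial data forces the strong limit to be the constant $\overline{U}_0$. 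No long-time decay theorem is invoked.

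Your time-rescaling $\tau=t\,\eps^{j_0-\gamma}$ is a natural alternative, but two points need repair. First, the reduced profile equation is only an $O(\eps)$ flux-perturbation of $\partial_\tau V+c_{j_0}\partial_\theta V^{j_0+1}=0$ pointwise, while the relevant $\tau$-horizon is $[0,T\eps^{j_0-\gamma}]$, which diverges; a Kuznetsov-type flux-perturbation estimate then yields an accumulated $L^1$ error of order $\tau\,\eps\sim\eps^{1+j_0-\gamma}$, which blows up whenever $\gamma>j_0+1$ (this is allowed, e.g.\ $j_0=1<q-1<\gamma$). You would need a uniform-in-$\eps$ long-time decay statement for the whole family of perturbed fluxes, which is not what the Lax--Dafermos results give and is essentially the content of the averaging-lemma compactness you mention only as an ``alternative.'' Second, the flux $c_{j_0}v^{j_0+1}$ is not uniformly convex (it is degenerate at $v=0$ for $j_0\ge2$ and has an inflection when $j_0$ is even), so the decay you cite must be taken from the non-convex/degenerate versions (Dafermos, Chen--Frid), not the classical Lax--Oleinik $1/\tau$ rate; with only $L^\infty$ periodic data the rate is in general weaker and not obviously compatible with the perturbation budget above. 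The ``cleaner route'' you name in your last sentence is in fact the paper's actual proof, once the phase is rescaled to $\eps^{-j}$ so that the oscillation lives in the data rather than in time.
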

Obviously the interesting case is when $U_0$ is non constant.
In this context, when $U_0$ is smooth and non constant 
the first time when a shock occurs $T_\eps \to 0$ when $\eps \to 0$.
Thus solutions are weak entropy solutions.
\\
The proof is in the spirit of  \cite{CJR} and uses averaging lemmas (see  \cite{P} and the  references given there). 
The proof  is briefly expounded to be  self-contained. 
\\

\noindent 
\bpro
 For non constant initial data  it is impossible to  avoid shock waves on any fixed strip $[0,T_0]\times\R^d$
 with $T_0 >0$
 as in the previous proof of Theorem \ref{Tpgamma} since the time span 
 of smooth solutions is $ \eps^\beta$ where $\beta=\gamma - j > 0$.
 
First, with a change of space variable
 $\x \leftrightarrow \x - t. \a(\underline{u})$, 
   we can assume that $\a(\underline{u})=0$.

The WKB  computations  use the following anzatz: 
$\ue(t,\x)= \underline{u} + \eps \ve(t,\x) $ where
$\ve(t,\x)= \We (t, \eps^{-j} \phi(t,\x) )$.  
Indeed, the condition \eqref{Htgamma} leads to such anzatz as we can see in the WKB computations of the proof of Theorem \ref{Tpgamma}.
Then $\We$ satisfies the one dimensional  nonlinear conservation laws:  
 \bqr  
  \label{eqWeps}
 \del_t \We  +   \del_\theta \left ( c_j\We^{j+1} 
        + \eps^{}  g^\eps_j(\We) \right)= 0 ,   
    & \We(0,\theta)=U_0(\eps^{-\beta}\theta),
   &  c_j \neq 0 . 
\eqr
 $\We(0,.)$ converges weakly towards $\overline{U}_0$.
 As in \cite{CJR}, $\We$ is relatively  compact in $L^1_{loc}$ 
thanks to  averaging lemmas.
Then 
$\We$ converges towards  the unique entropy solution $W$ of
\bqre 
    \del_t W  +  c_j \del_\theta  W^{j+1} = 0, 
 & \qquad  & 
 W(0,\theta)=\overline{U}_0. 
 \eqre  
That is to say that $  W(t,\t) \equiv \overline{U}_0$.
Then 
$\ve(t,\x)$  
 converges towards $\overline{U}_0$ in  $L^1_{loc}$ which 
 concludes the proof.
\epro 



\section{Characterization of  nonlinear flux \label{sAFN} }
\smallskip

The flux nonlinearity is characterized by   the  parameter $\alpha$ in Definition \ref{defLPT}, the Lions-Perthame-Tadmor definition of nonlinear flux. 
The smoothing effect  depends only on  the best $\alpha=\alpha_{\sup}$. 
 The understanding  of the  parameter $\alpha_{\sup}$  is   a key step to the comprehension of the regularity of entropy solutions. 
 Unfortunately,  there are only few examples where $\alpha_{\sup}$ is computed in dimension $2$ 
 (\cite{LPT, TT}) and there are some remarks in \cite{Ja09,JaX09,BJ}.

For the first time,  for all smooth fluxes  and for all dimensions   we characterize the fundamental  parameter  $\alpha_{\sup}$.
For this purpose we state    Definition \ref{defgnf}  of smooth nonlinear flux.  
This new definition is related to  the critical geometric optics  expansion given  in Section \ref{sHFW}.
Let us emphasize on  three important consequences of Definition \ref{defgnf}.
 \begin{itemize}
\item  The parameter $\alpha_{\sup}$ is explicitly characterized with the  flux derivatives in Theorem \ref{thmal}.
\item    The super critical geometric optics  expansion is built in  Theorem \ref{thsgo}.
\item    
The uniform  maximal smoothing effect is highlighted in section \ref{sconj}.
\end{itemize}

    We explain this new definition in the subsection  \ref{ssnsf}. 
   We compare our new definition with some other classical definitions in subsection \ref{sscomparison}.
    We   prove that all definitions of nonlinear flux  are equivalent for analytical flux.

\subsection{Nonlinear smooth flux} \label{ssnsf}
\alali

%

We introduce a new definition of nonlinear $C^\infty$ flux  related to critical geometric optics expansions.
When  the compatibility conditions \eqref{Hgamma} are satisfied  in Theorem \ref{Tpgamma}, very high frequency waves are smooth solutions of the conservation law \eqref{1.1}.
Furthermore,  these conditions are optimal thanks to Theorem \ref{Tptgamma}.
What is the highest frequency waves  as in   Theorem \ref{Tpgamma} solutions of \eqref{1.1}? 
Indeed,  near the constant state $\underline{u}$ 
 we can propagate waves with frequency $\eps^{-m}$, $m>1$,
if  the set $\{\a'(\underline{u}),\a''(\underline{u}),\cdots,\a^{(m-1)}(\underline{u}) \}^\perp$ is not reduced to $\{0\}$. 
Thus the maximal $m$ occurs when
$  \{0\} =\{\a'(\underline{u}),\a''(\underline{u}),\cdots,\a^{(m)}(\underline{u}) \}^\perp $ 
and $ \{0\} \neq \{\a'(\underline{u}),\a''(\underline{u}),\cdots,\a^{(m-1)}(\underline{u}) \}^\perp $ .
We now can write the following definition.
 
\begin{dfni}{\bf[Nonlinear smooth  flux]}  \label{defgnf}\alali
 Let  the flux $\F $ belong to $  C^\infty(\R,\R^d)$ and $I=[-M,M]$.
The flux is said to be  {\bf nonlinear}  on $I$ if,
   for all $u \in I$,   
there exists $m \in\N^* $ such that 
 \bqr \label{defnl} 
rank\{\a'(u),\cdots,\a^{(m)}(u)\}& = & d.
 \eqr
Furthermore, the flux is said to be {\bf genuine nonlinear} if
$m=d$  is enough in \eqref{defnl} for  all $u \in I$.  
\end{dfni}
\noindent
 As usual, the non-linearity is a matter of the second derivatives of $\F$,  $ \a'= \F'' $.
 Notice that $m \geq d $. We need at least $d$ vectors  in \eqref{defnl} to span the space $\R^d$.
 Thus the genuine nonlinear case is the strongest  nonlinear case.

 The genuine nonlinear case was first stated in \cite{CJR} (condition (2.8) and Lemma 2.5 
 p. 447 therein).  The genuine nonlinear condition  in  the $d$ dimensional case
 \bqr
 \det(\a'(u), \a''(u), \cdots, \a^{(d)}(u))  & \neq &   0,   \quad \forall u \in I,   \label{gnf}
 \eqr
 was also in  \cite{COW}, see condition (16) p. 84 therein.
 The simplest  example  of genuine nonlinear flux $\F$  with   the velocity $\a$ 
was given in \cite{CJR,COW,BJ}:
  \bqre \a(u)& =& (u,u^2,\cdots,u^d)   
    \qquad \mbox{ with }\F(u) = \left(\frac{u^2}{2},\cdots,\frac{u^{d+1}}{d+1}\right) .
  \eqre
 Definition \ref{defgnf} is a generalization of the genuine nonlinear condition \eqref{gnf}.
 Definition \ref{defgnf} is more explicit  with  following integers with 
 $I=[-M,M]$. 
\bqr  \label{defmu} 
      d_\F[u] & = & 
      \inf\{ k \geq 1, rank\{\F''(u),\cdots,\F^{(k+1)}(u)\}= d \} \qquad  \geq d, \\
         \label{defm} 
       d_\F  & = & \dis \sup_{|u|\leq M}  d_\F[u]  \qquad \in \{d,d+1, \cdots\}  \cup \{+\infty\}.
\eqr
Indeed, Definition \ref{defgnf}  states  that  the flux is genuine nonlinear 
when  $ d_\F$ 
reaches its minimal value, $ d_\F=d$.
\\
Conversely, when  the flux $\F$ is linear,   $\a$ is  a constant vector   in $\R^d$
 and  $ d_\F$ reaches its maximal value, 
 $ d_\F=+\infty$. 
\\
 Between $ d_\F=d$ and $ d_\F=+\infty$, there is a large  variety of nonlinear flux.
\\

    The following theorem gives  the optimal parameter  $\alpha$  \eqref{defal}  for smooth flux.
\begin{thm}{\bf[Sharp measurement of the flux  non-linearity ]}
   \label{thmal}
 \alali
 Let $\F$ be a smooth flux, $\F \in C^\infty([-M,M],\R^d)$, the measurement
 of the flux  non-linearity  $\alpha_{\sup}$ is given by
 $$  
    \begin{array}{ccccc}
        \al_{\sup} & = & \dis  \frac{1}{ d_\F} & \leq & \dis \frac{1}{d}.
   \end{array}
 $$
 Furthermore, when  $\al_{\sup} > 0$ there exists $\underline{u} \in [-M,M]$
  such that $ d_\F=d_\F[\underline{u}]$.
\end{thm}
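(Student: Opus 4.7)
The plan is to establish the two inequalities $\alpha_{\sup} \geq 1/d_\F$ and $\alpha_{\sup} \leq 1/d_\F$ separately, after first securing the existence of a maximizer $\underline{u}$, since the upper bound argument will actually be built \emph{at} such a point.

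\textbf{Step 1 (Existence of $\underline{u}$ with $d_\F[\underline{u}]=d_\F$).} I would first show that the integer--valued function $u \mapsto d_\F[u]$ is upper semicontinuous on $[-M,M]$: if $d_\F[u_0]=k_0<\infty$ then some $d\times d$ minor extracted from $(\F''(u_0),\dots,\F^{(k_0+1)}(u_0))$ is non-zero, and by continuity this minor stays non-zero in a neighbourhood, forcing $d_\F[u]\leq k_0$ nearby. Upper semicontinuous functions attain their supremum on a compact set, so if $d_\F<\infty$ the maximum is reached at some $\underline{u}$. If instead $d_\F=+\infty$, pick $u_n$ with $d_\F[u_n]\to\infty$; a cluster point $\underline{u}$ must satisfy $d_\F[\underline{u}]=+\infty$ by the same semicontinuity, and along a diagonal sequence one produces a unit vector $\xi^\ast$ orthogonal to $\a^{(k)}(\underline{u})$ for every $k\geq 1$, leading to $\alpha_{\sup}=0$ (see Step~3 below). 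In either case the equality $d_\F=d_\F[\underline{u}]$ holds when $\alpha_{\sup}>0$.

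\textbf{Step 2 (Lower bound $\alpha_{\sup}\geq 1/d_\F$).} I would apply a standard sublevel set lemma: if $f\in C^k([a,b])$ satisfies $|f^{(k)}|\geq c$ on $[a,b]$, then $|\{|f|\leq\delta\}|\leq C_k (\delta/c)^{1/k}$. To use it on $f(v)=\tau+\a(v)\cdot\xi$, I would introduce
\[
 \Psi(u,\xi) \;=\; \max_{1\leq j\leq d_\F}\,|\a^{(j)}(u)\cdot\xi|.
\]
For $|\xi|=1$ and each $u\in[-M,M]$, the rank condition at $u$ (with $d_\F[u]\leq d_\F$) forbids $\xi$ from annihilating all of $\a'(u),\dots,\a^{(d_\F)}(u)$, so $\Psi(u,\xi)>0$. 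Compactness of $[-M,M]\times S^{d-1}$ then yields a uniform lower bound $\Psi\geq c_0>0$. For fixed $(\tau,\xi)\in S^d$, by continuity a finite cover of $[-M,M]$ into intervals $I_i$, each carrying an order $j_i\leq d_\F$ with $|\a^{(j_i)}(\cdot)\cdot\xi|\geq c_0|\xi|/2$, is available with cardinality controlled uniformly in $\xi$. Summing the sublevel bounds on each $I_i$ yields $|W_\delta(\tau,\xi)|\leq C(\delta/|\xi|)^{1/d_\F}$. Very small $|\xi|$ (equivalently $|\tau|$ close to $1$) forces $W_\delta$ to be empty for $\delta$ small, so one may restrict to $|\xi|\geq c_1>0$, and this gives $|W_\delta|\leq C\delta^{1/d_\F}$ uniformly on $S^d$.

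\textbf{Step 3 (Upper bound $\alpha_{\sup}\leq 1/d_\F$).} Using the $\underline{u}$ from Step 1, set $V=\mathrm{span}\{\a'(\underline{u}),\dots,\a^{(d_\F-1)}(\underline{u})\}$, which is a proper subspace of $\R^d$. Since $\a^{(d_\F)}(\underline{u})\notin V$, its orthogonal projection onto $V^\perp$ is non-zero; pick the unit vector $\xi^\ast\in V^\perp$ in this direction, so $\a^{(k)}(\underline{u})\cdot\xi^\ast=0$ for $1\leq k\leq d_\F-1$ while $C_0:=\a^{(d_\F)}(\underline{u})\cdot\xi^\ast/d_\F!\neq 0$. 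Choose $\tau^\ast=-\a(\underline{u})\cdot\xi^\ast$ and rescale $(\tau^\ast,\xi^\ast)$ onto $S^d$ with a factor $\mu>0$. A Taylor expansion gives
\[
 \tau+\a(v)\cdot\xi \;=\; \mu\,C_0\,(v-\underline{u})^{d_\F}\bigl(1+O(v-\underline{u})\bigr),
\]
so the sublevel set $\{|v-\underline{u}|\leq (\delta/(2\mu|C_0|))^{1/d_\F}\}$ is contained in $W_\delta(\tau,\xi)$ for small $\delta$, whence $|W_\delta(\tau,\xi)|\geq c\,\delta^{1/d_\F}$. Comparing with \eqref{defal} forces any admissible $\alpha$ to satisfy $\alpha\leq 1/d_\F$, and the bound $\alpha_{\sup}\leq 1/d$ follows from $d_\F\geq d$.

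\textbf{Main obstacle.} The delicate point is the uniformity in $(\tau,\xi)\in S^d$ of Step~2: one must check that the finite cover of $[-M,M]$ adapted to $\xi$, the size of each subinterval, and the resulting constant can all be taken uniform in the spectral parameter. The compactness argument via $\Psi$ together with a careful treatment of the degenerate range $|\xi|\to 0$ is what makes the estimate independent of $(\tau,\xi)$; all other steps are comparatively soft, relying only on Taylor expansion and linear algebra.
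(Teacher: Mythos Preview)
Your proof is correct and follows the same two-step strategy as the paper: upper semicontinuity of $u\mapsto d_\F[u]$ to locate $\underline{u}$, a Stein-type sublevel estimate uniform in $(\tau,\xi)$ for the lower bound $\alpha_{\sup}\geq 1/d_\F$, and a Taylor expansion at $\underline{u}$ in a direction $\xi^\ast\perp\{\a'(\underline{u}),\dots,\a^{(d_\F-1)}(\underline{u})\}$ for the upper bound. The one organizational difference is that the paper absorbs the uniformity issue you flag as the ``main obstacle'' into a pre-packaged parametric sublevel lemma (Lemma~\ref{Lscompact}): if $\sum_{j=1}^{k}|\partial_v^j\phi(v;p)|>0$ on a compact $V\times P$, then $\sup_{p\in P}\mathrm{meas}\{|\phi(\cdot;p)|\leq\eps\}\leq C\eps^{1/k}$; this is applied directly with $p=(\tau,\xi)$ ranging over the sphere minus a neighbourhood of the poles $\xi=0$, which sidesteps your covering-by-intervals argument and the need to control its cardinality.
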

  A similar result for the genuine nonlinear case: $ d_\F=d$, can be found in \cite{BJ}.
 
This theorem is a powerful tool to compute the parameter $\al_{\sup}$, for instance:
\begin{itemize}
\item  $F(u)= (\cos(u), \sin(u))$  is genuine nonlinear flux , $\al_{\sup}=1/2$ 
 since $\det(F''(u),F'''(u))=1$.
 \item When  $F$ is polynomial  with degree less or equal to the space dimension 
$d$,  $\al_{\sup}=0$ and $F$ does not satisfy Definition \ref{defgnf}. 
\item 
It is well known that the ``Burgers multi-D'' flux 
 $F(u)=(u^2,\cdots,u^2)$
 is not nonlinear  when $d\geq 2$.
Let us explain this fact by  two arguments: the explicit computation of $\alpha_{\sup}$ 
and  a sequence of high frequencies waves solutions of \eqref{1.1}.
 \begin{itemize}
  \item 
  $\a''(u) \equiv 0$ so  $d_\F = + \infty$ and  Theorem \ref{thmal} yields $\alpha_{\sup}=0$.
 \item 
The sequence of oscillations with large amplitude $(u_\eps)_{0<\eps\leq 1}$
given by $u_\eps(t,\x)= u_0^\eps(\x)=\sin\left(\frac{x_1 - x_2}{\eps} \right)$ 
  blows up in any $W^{s,1}_{loc}$ , $s>0$:  for all $t$, 
$ 
\displaystyle{  \sup_{0 < \eps \leq 1} \|u_\eps(t,.)\|_{W^{s,1}([0,1]^d,\R)} = + \infty.}
$
But the sequence of initial data is uniformly bounded in $L^\infty$, $\| u_0^\eps\|_{L^\infty} = 1$. 
Thus there is no improvement of the uniform initial Sobolev bounds.
\end{itemize}
%
%
 \item When  $F$ is polynomial such that $deg(F_i) = 1+i $,
    $F$ is genuine nonlinear:  $\dis \al_{\sup} =  \frac{1}{ d} $ .  

\end{itemize}

   \begin{remark}      For smooth Flux  $\alpha_{\sup}$ is the inverse of an integer. 
          Not all  real value of $\alpha_{\sup}$  in $[0,1]$ are possible for $\F\in C^\infty$. 
        With less smooth flux, all other values of $\alpha_{\sup}$ are possible
       (\cite{LPT,DW,TT,JaX09,CJ1, BGJ6}).
  \end{remark}


We  now begin the proof of  Theorem \ref{thmal}  related  to  some proofs of phase stationnary lemmas (\cite{St,JaX09,BJ}). 
The proof  needs many lemmas. 
 First we recall  Lemma 1 p. 125  from \cite{BJ} giving the optimal $\al$
 for real  functions. 
\begin{lem}[\cite{BJ}] 
 \label{mphi}
 Let $\ph \in C^\infty([-M,M],\R)$, 
 \bqre
       m_\ph[v] &= & \inf \{ k \in \N,\, 
       \ph^{(k)}(v) \neq 0 \}   
 \qquad   
 \in \overline{\N}= \N \cup \{+\infty\}, \\
  m_\ph  & = & \sup_{|v|\leq M } m_\ph[v]  \quad  \in \overline{\N}, \\
    Z(\ph,\eps)  & = & \{ v \in [-M,M],\; |\ph(v)|\leq \eps\}.
 \eqre
If $0 <m_\ph<+\infty$ 
then there exists $C> 1 $  dependent of the function  $\phi$ such that, for all $\eps \in ]0,1]$,
  \bqr  \label{mphineq}
     C^{-1} \eps^\alpha \leq  meas(Z(\ph,\eps))\leq  C \eps^\alpha 
  & \mbox{  with  } & 
 \dis \alpha =\frac{1}{m_\ph}.
 \eqr
\end{lem}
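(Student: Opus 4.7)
The plan is to prove the two inequalities in \eqref{mphineq} separately. For the lower bound, the key idea is to show that the supremum $m_\ph$ is actually attained at some point $\bar v \in [-M,M]$, so that a Taylor expansion around $\bar v$ furnishes a small interval contained in $Z(\ph,\eps)$. First I would observe that $v \mapsto m_\ph[v]$ is upper semi-continuous on $[-M,M]$: if $m_\ph[v_0] = k < +\infty$ then $\ph^{(k)}(v_0) \neq 0$, so by continuity $\ph^{(k)}$ does not vanish in a neighborhood of $v_0$, giving $m_\ph[v] \le k$ there. Upper semi-continuous functions on compact sets attain their supremum, hence there exists $\bar v$ with $m_\ph[\bar v] = m_\ph =: m \ge 1$. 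By definition $\ph(\bar v) = \ph'(\bar v) = \cdots = \ph^{(m-1)}(\bar v) = 0$ while $\ph^{(m)}(\bar v) \neq 0$, and Taylor's formula yields $|\ph(v)| \le K|v - \bar v|^m$ on a fixed neighborhood of $\bar v$. Taking $c_1 = (1/K)^{1/m}$, the interval $[\bar v - c_1\eps^{1/m}, \bar v + c_1\eps^{1/m}]$ lies inside $Z(\ph,\eps)$ for $\eps$ sufficiently small, which yields $\mathrm{meas}(Z(\ph,\eps)) \ge 2c_1 \eps^\alpha$; the trivial range $\eps$ close to $1$ is absorbed into the final constant.

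For the upper bound, my plan is a compactness-plus-local-estimate argument. For every $v \in [-M,M]$, the order $k_v := m_\ph[v]$ satisfies $k_v \le m$ and $\ph^{(k_v)}(v) \neq 0$, so there is an open interval $U_v \ni v$ and a constant $c_v > 0$ with $|\ph^{(k_v)}| \ge c_v$ on $U_v$. Compactness of $[-M,M]$ produces a finite subcover $U_1,\dots,U_N$ with corresponding orders $k_j \le m$ and constants $c_j > 0$. On each $U_j$ I invoke the classical one-dimensional sublevel set estimate: if $\ph \in C^k$ on an interval with $|\ph^{(k)}| \ge c > 0$, then $\mathrm{meas}\{|\ph| \le \eps\} \le \widetilde C(k)\,(\eps/c)^{1/k}$. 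This gives $\mathrm{meas}(Z(\ph,\eps) \cap U_j) \le C_j\,\eps^{1/k_j} \le C_j\,\eps^{1/m}$ for $\eps \in ]0,1]$, and summing over the finite cover yields $\mathrm{meas}(Z(\ph,\eps)) \le C\,\eps^\alpha$.

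The only non-trivial ingredient is the sublevel set estimate quoted above, which is also the natural place to expect the main obstacle. Its proof is by induction on $k$: the case $k=1$ is immediate, since $|\ph'| \ge c$ forces strict monotonicity and $\{|\ph| \le \eps\}$ is a single interval of length at most $2\eps/c$; the inductive step applies the hypothesis to $\ph'$ to bound the measure of $\{|\ph'| \le \eps^{(k-1)/k}\}$, then uses monotonicity of $\ph$ on each component of the complement. This is purely classical and independent of the conservation-law context, so I would simply quote it from \cite{BJ} (or from standard references on van der Corput / phase stationary lemmas) rather than redo the induction.
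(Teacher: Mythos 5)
Your proof is correct. The paper itself does not reprove this lemma --- it is quoted from \cite{BJ} --- but all three of your ingredients are exactly the ones the paper deploys in the surrounding lemmas: the upper semi-continuity of $v \mapsto m_\ph[v]$ together with its attainment on the compact interval (the same observation that drives Lemma~\ref{ldf}), the van der Corput--type sublevel estimate (Lemma~\ref{Ls2}), and the compactness patching over points of varying order $k_v\le m$ (the idea behind Lemma~\ref{Lscompact}). The one place your sketch diverges is the inductive step of the sublevel lemma: you bound the measure where $|\ph'|$ is below a threshold via the inductive hypothesis, use monotonicity of $\ph$ on the complement, and optimize the threshold, whereas the paper's proof of Lemma~\ref{Ls2} follows Stein by splitting off $[-\eta,\eta]$ around the unique zero of $\ph^{(k-1)}$, rescaling by $\eta$ to apply the inductive hypothesis on the tails, and then minimizing over $\eta$. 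Both routes are classical and give a constant $\overline{c}_k$ independent of $\ph$; since you only quote the sublevel estimate rather than rely on the details of your induction, this difference is cosmetic, and the overall argument is sound (the case $m_\ph[v]=0$ in your cover is degenerate but harmless, as $Z(\ph,\eps)\cap U_v$ is empty for small $\eps$ there).
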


  To compute the measure of $Z(\ph,\eps) $ with a  different assumption, 
we adapt     a  proof of E. Stein about  stationary phase method \cite{St}.  
The main point in the  following lemma is that the constant does not depend  on the function $\phi$. 
Indeed, the condition   $1 \leq  |\phi^{(k)}(v)|$ is stronger than  the condition $m_\ph= k$.
The following  lemma is fundamental to prove Theorem \ref{thmal}.
\begin{lem} \cite{BJ} \label{Ls2}
Let $k \geq 1$, $I$ an interval of $\R$,
    $ \phi \in C^k(I,\R)$.
 \bqre 
 \begin{array}{lrclc}
  \mbox{ If  } &
 1 &\leq & |\phi^{(k)}(v)| ,  & \mbox{ for all } v \in I,   \\
\mbox{ then } 
 & \mbox{measure}\{v \in I,\;  |\phi(v)|  \leq  \eps\}  &\leq & \overline{c}_k  \; \eps^{1/k}, &
 \end{array}
\eqre
where  $\overline{c}_k$ are constant independent of $\phi$. 
\end{lem}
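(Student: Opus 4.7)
The plan is to prove the lemma by induction on $k\geq 1$, following a standard van der Corput--type argument. The key observation is that the single quantitative assumption $|\phi^{(k)}|\geq 1$ forces all constants to depend only on $k$, not on $\phi$. I would first handle the base case $k=1$: since $\phi'$ is continuous with $|\phi'|\geq 1$, it has constant sign on $I$, so $\phi$ is strictly monotone with $|\phi(u)-\phi(v)|\geq |u-v|$, and the sublevel set $\{v\in I:|\phi(v)|\leq\eps\}$ is an interval of length at most $2\eps$, giving $\overline{c}_1=2$.

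For the inductive step, assuming the conclusion for $k-1$ with constant $\overline{c}_{k-1}$, I would split $I$ according to a threshold $\delta>0$ to be chosen later. The set $A_\delta:=\{v\in I:|\phi^{(k-1)}(v)|\leq\delta\}$ is a (possibly empty) subinterval of $I$, because $(\phi^{(k-1)})'=\phi^{(k)}$ has constant sign, making $\phi^{(k-1)}$ strictly monotone; the mean value estimate $|\phi^{(k-1)}(u)-\phi^{(k-1)}(v)|\geq |u-v|$ then gives $\mathrm{meas}(A_\delta)\leq 2\delta$. On each of the at most two connected components $I_\pm$ of $I\setminus A_\delta$, the rescaled function $\psi:=\phi/\delta$ satisfies $|\psi^{(k-1)}|\geq 1$, so the inductive hypothesis applied on each piece gives
\[
\mathrm{meas}\{v\in I_\pm:|\phi(v)|\leq\eps\}\;=\;\mathrm{meas}\{v\in I_\pm:|\psi(v)|\leq\eps/\delta\}\;\leq\;\overline{c}_{k-1}\,(\eps/\delta)^{1/(k-1)}.
\]

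Summing the three contributions yields $\mathrm{meas}\{v\in I:|\phi(v)|\leq\eps\}\leq 2\delta+2\overline{c}_{k-1}(\eps/\delta)^{1/(k-1)}$. I would balance the two terms by taking $\delta=\eps^{1/k}$, which makes $\delta=\eps^{1/k}$ and $(\eps/\delta)^{1/(k-1)}=\eps^{1/k}$ coincide. This produces the recursion $\overline{c}_k=2+2\overline{c}_{k-1}$, whose solution depends only on $k$, and gives the claimed bound. The only nontrivial step is verifying that $A_\delta$ is an interval so that $I\setminus A_\delta$ splits into at most two pieces on which the induction applies cleanly; this is immediate from the strict monotonicity of $\phi^{(k-1)}$, itself a direct consequence of the quantitative hypothesis $|\phi^{(k)}|\geq 1$. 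The rest is a one-parameter optimization and bookkeeping of constants.
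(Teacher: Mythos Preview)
Your proof is correct and follows essentially the same van der Corput--type induction as the paper: split off the subinterval where $|\phi^{(k-1)}|$ is small (of length $\leq 2\delta$ by monotonicity), rescale by $\delta$ on the complement to apply the inductive hypothesis, and optimize. The only cosmetic differences are that the paper first reduces to $I=\R$ with the root of $\phi^{(k-1)}$ placed at the origin, and minimizes the bound exactly in $\eta$ to obtain $\overline{c}_k=4(\overline{c}_{k-1}/(k-1))^{(k-1)/k}$, whereas you balance by the cruder choice $\delta=\eps^{1/k}$ to get $\overline{c}_k=2+2\overline{c}_{k-1}$; neither affects the substance of the argument.
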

\bpro
Since the result is independent of the  interval I and the constant sign of the derivative $\phi^{(k)}$ on the interval, let us suppose that $ I = \R$  and
$ \phi^{(k)} (v) \geq 1$   for all  $ v \in \R$. 
 Thus   we have   for all $ v \geq u $ :  $ \phi^{(k-1)} (v)  -  \phi^{(k-1)} (u) \geq v - u $ . 
 This inequality   shows that the function $\phi^{(k-1)}$ admits an unique root. 
Assume  $\phi^{(k-1)}(0) = 0$ without loss of generality.

With these assumptions we  prove the lemma  when $ k = 1$. 
 Since  $|\phi(v)|\geq    |v|$ for all $v$, we have 
  $Z(\phi,I, \eps) = \{v \in I,\, |\phi(v)| \leq \eps \}\subset [-\eps,\eps ]$.
So the lemma is proved for $k=1$ with $\overline{c}_1=2$.

We now prove the Lemma by induction on $k>1$. 
 We have  for all $v$,  $|\phi^{(k-1)}(v)| \geq  |v|$.
Let $\eta> 0 $ . 
 Notice that $\mbox{meas}(Z(\phi,[-\eta,\eta], \eps)) \leq 2 \eta$.
Let $\psi$ be the function $\phi/\eta$.
Notice that $\psi^{(k-1)}(v) \geq 1$ on $]\eta, +\infty[$.
  By our inductive hypothesis on $\psi$ we have
$\mbox{meas}(Z(\psi,]\eta, + \infty[, \eps) \leq \overline{c}_{k-1} (\eps)^{1/(k-1)}$, so 
$\mbox{meas}(Z(\phi,]\eta, + \infty[, \eps) \leq \overline{c}_{k-1} (\eps/\eta)^{1/(k-1)}$. 
\\
A similar argument yields $\mbox{meas}(Z(\phi,]- \infty, -\eta[, \eps) \leq \overline{c}_{k-1} (\eps/\eta)^{1/(k-1)}$. 
These previous three bounds gives  
$ \mbox{meas}( Z(\phi,\R, \eps)) \leq 
  \dis  g(\eta)=  2 \left(\eta + \overline{c}_{k-1} (\eps/ \eta)^{1/(k-1)}\right)$.
This last inequality is valid for all $\eta >0$.  It suffices to minimize the function $g$ on $]0,+\infty[$.
A  computation of the minimum   yields
 $
   \mbox{meas}( Z(\phi,\R,\eps)) \leq 
  \dis  \overline{c}_{k}\eps^{1/k}$,
 where $  \overline{c}_{k}= 4 \left( \overline{c}_{k-1} /(k-1)\right)^{(k-1)/k}   $  
which concludes the proof.
 \epro 

The previous lemma  is generalized with parameters in a compact set, see Lemma 4 p. 127 in \cite{BJ}.
\begin{lem}[\cite{BJ}]\label{Lscompact}
 Let $P$ be a compact set of parameters, 
$k$ a positive integer, 
$A > 0$, $V=[-A,A]$,  $K= V \times  P$,
 $\phi(v;p) \in C^0(P,C^{k}(V, \R))$, 
 such that, for all $(v,p)$ in the compact $K$, we have 
\bqre 
 \label{cond2}
\dis 
  \sum_{j=1}^{k} 
 \left| \frac{\partial^j \phi}{\partial v^j} \right| (v;p)& >& 0.
 \eqre
 Let $ Z(\phi(.;p),\eps)=\{v \in V,\;| \phi(v;p)| \leq \eps\}$, 
then  there exists  a constant $C$ such that
\bqre   
 \displaystyle{  \sup _{p\in P}} \mbox{meas}(Z(\phi(.;p),\eps)) &\leq &C \eps^{1/k}.
\eqre
\end{lem}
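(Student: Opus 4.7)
The plan is to reduce Lemma \ref{Lscompact} to the non-parametric Lemma \ref{Ls2} via a compactness argument on $K = V \times P$. The delicate point is that the hypothesis \eqref{cond2} only tells us, at each point, that \emph{some} derivative among $\partial_v^1\phi,\dots,\partial_v^k\phi$ is non-zero, and we must promote this pointwise, index-dependent positivity into finitely many uniform lower bounds that are valid on a finite cover of $K$.

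First I would exploit joint continuity. Since $\phi \in C^0(P,C^k(V,\R))$, each map $(v,p)\mapsto \partial_v^j\phi(v;p)$ is continuous on the compact set $K$, so the continuous function
\[
\Sigma(v,p) \;:=\; \sum_{j=1}^{k} \left|\frac{\partial^j \phi}{\partial v^j}\right|(v;p)
\]
is strictly positive on $K$, hence bounded below by some $\delta>0$. In particular, at every $(v_0,p_0)\in K$ there is an index $j_0\in\{1,\dots,k\}$ with $|\partial_v^{j_0}\phi(v_0;p_0)|\geq \delta/k$. Continuity then yields an open product neighborhood $I_{v_0}\times W_{p_0}\subset V\times P$ on which $|\partial_v^{j_0}\phi(v;p)|\geq c$ with $c:=\delta/(2k)$. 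By compactness of $K$, extract a finite subcover $\{I_\ell \times W_\ell\}_{\ell=1}^{N}$ with associated indices $j_\ell \in \{1,\dots,k\}$; the integer $N$, the intervals $I_\ell$, the neighborhoods $W_\ell$, the indices $j_\ell$, and the constant $c$ are all fixed once and for all, independently of $p$.

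Next I would estimate the measure for a fixed but arbitrary $p\in P$. The slice $\{(v,p) : v\in V\}$ is covered by $\{I_\ell : p\in W_\ell\}$, and on each such $I_\ell$ the function $v\mapsto \phi(v;p)/c$ has $k$-th derivative... actually its $j_\ell$-th derivative of absolute value at least $1$. Applying Lemma \ref{Ls2} on $I_\ell$ (with order $j_\ell$) to this rescaled function gives
\[
\mbox{meas}\bigl\{v\in I_\ell : |\phi(v;p)|\leq \eps\bigr\} \;\leq\; \overline{c}_{j_\ell}\,(\eps/c)^{1/j_\ell}.
\]
For $\eps\leq 1$, since $1\leq j_\ell\leq k$ we have $\eps^{1/j_\ell}\leq \eps^{1/k}$; summing over the at most $N$ relevant pieces yields $\mbox{meas}(Z(\phi(\cdot;p),\eps))\leq C\,\eps^{1/k}$ with $C=N\max_\ell \overline{c}_{j_\ell}\,c^{-1/j_\ell}$. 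The case $\eps\geq 1$ is absorbed trivially by the crude bound $\mbox{meas}(Z(\phi(\cdot;p),\eps))\leq 2A$.

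The main obstacle to anticipate is exactly the uniformity in $p$: without the joint-continuity/compactness step, one would obtain a cover whose cardinality and whose lower bound $c$ depend on $p$, wrecking the uniform constant $C$. The trick is to run the covering argument once on $K$ rather than separately for each slice, after which the non-parametric Lemma \ref{Ls2} is applied $N$ times with only the finitely many orders $j_1,\dots,j_N$, producing a single constant valid for every $p\in P$.
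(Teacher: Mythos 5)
Your proof is correct. The paper itself does not prove Lemma~\ref{Lscompact} but defers to~\cite{BJ}; your compactness argument — extracting a uniform lower bound $\delta$ for $\Sigma$ on $K=V\times P$, covering $K$ by finitely many product neighborhoods $I_\ell\times W_\ell$ on each of which a single fixed derivative $\partial_v^{j_\ell}\phi$ is bounded below by $c$, and then applying Lemma~\ref{Ls2} to $\phi/c$ on each $I_\ell$ with the monotonicity $\eps^{1/j_\ell}\le\eps^{1/k}$ for $\eps\le 1$ — is exactly the expected reduction to the non-parametric lemma and gives a constant $C$ independent of $p$, as required.
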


We now turn to  the key integer $d_\F$.
\begin{lem}[] \label{ldf}
  If $\F$ is a nonlinear flux on $I$ in the sense of Definition \ref{defgnf} then 
 $d_\F$ is finite and there exists $\underline{u} \in I$ such that 
 $d_\F = d_\F[\underline{u}]$.
\end{lem}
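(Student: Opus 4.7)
The plan is to prove the lemma by establishing that $u \mapsto d_\F[u]$ is upper semicontinuous on the compact set $I=[-M,M]$, and then invoking the standard fact that an upper semicontinuous function on a compact set attains its supremum. Since the function takes only integer values (or $+\infty$), upper semicontinuity of the integer-valued map will directly yield a point where the supremum is reached, and finiteness will follow from the pointwise finiteness guaranteed by Definition \ref{defgnf}.

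First, I would record pointwise finiteness: by hypothesis, for every $u \in I$ there exists some integer $m \geq 1$ with $\mathrm{rank}\{\a'(u),\dots,\a^{(m)}(u)\}=d$, which (using $\a=\F'$) is precisely the condition that the defining set of $d_\F[u]$ in \eqref{defmu} is nonempty. Hence $d \leq d_\F[u] < \infty$ for every $u \in I$.

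Next, I would prove upper semicontinuity at an arbitrary $\underline{u} \in I$. Setting $k=d_\F[\underline{u}]$, the rank condition provides $d$ indices $2\leq i_1<\cdots<i_d\leq k+1$ such that
\[
\det\!\bigl(\F^{(i_1)}(\underline{u}),\dots,\F^{(i_d)}(\underline{u})\bigr)\neq 0.
\]
Because $\F \in C^\infty(\R,\R^d)$, the map $u \mapsto \det(\F^{(i_1)}(u),\dots,\F^{(i_d)}(u))$ is continuous, so this determinant remains nonzero on some relative neighborhood $V \subset I$ of $\underline{u}$. For every $u \in V$ the $d$ vectors $\F^{(i_1)}(u),\dots,\F^{(i_d)}(u)$ are linearly independent and thus span $\R^d$, which is a subset of $\{\F''(u),\dots,\F^{(k+1)}(u)\}$; this yields $d_\F[u]\leq k = d_\F[\underline{u}]$ on $V$, i.e.\ upper semicontinuity.

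Finally, the compactness of $I$ combined with upper semicontinuity of the integer-valued function $u\mapsto d_\F[u]$ forces the supremum $d_\F = \sup_{u\in I} d_\F[u]$ to be attained at some $\underline{u}\in I$: take a maximizing sequence $u_n$, extract a convergent subsequence $u_n\to\underline{u}$ by Bolzano--Weierstrass, and apply upper semicontinuity to obtain $d_\F=\limsup d_\F[u_n]\leq d_\F[\underline{u}]\leq d_\F$. Hence $d_\F=d_\F[\underline{u}]<\infty$, which is exactly the claim. The only delicate step is the upper semicontinuity argument, but since it reduces to continuity of a fixed minor determinant of the family $\{\F^{(j)}\}$, smoothness of $\F$ makes it routine.
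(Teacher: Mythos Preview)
Your proof is correct and follows essentially the same route as the paper: both establish upper semicontinuity of $u\mapsto d_\F[u]$ by noting that a suitable $d\times d$ minor of $(\a'(u),\dots,\a^{(d_\F[u])}(u))$ is nonzero, hence remains nonzero on a neighborhood by continuity, and then invoke compactness of $I$ to conclude that the supremum is finite and attained. The only differences are cosmetic---you spell out the Bolzano--Weierstrass step explicitly, whereas the paper simply appeals to upper semicontinuity on a compact set.
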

{\bf Proof}
 Let $u$ be fixed in $I$. Then there exits, $1 \leq j_1 < j_2 < \cdots < j_d= d_\F[u]$ such that 
$ rank\{\a^{(j_1)}(u),\cdots,\a^{(j_d)}(u)\}  =  d$
 by  the definition of $d_\F[u]$.   So  the continuous function 
 $g(v) = \det(\a^{(j_1)}(v),\cdots,\a^{(j_d)}(v))$ does not vanish at $v=u$. 
 By continuity, this is still true on an open set $J$ with $u \in J$. 
  Since $j_d=d_\F[u]$, we have $d_F[v] \leq d_F[u]$ for all $v \in J$.  
  Thus $v \mapsto d_F[v]$ is upper semi-continuous and the result follows immediately on the 
 compact set $I$.  
\epro

Now we are able to prove Theorem \ref{thmal}.
\\

 {\bf Proof of Theorem \ref{thmal}}.
There are two steps.

\underline{step 1}: $ \dis  \al_{\sup} \geq \frac{1}{ d_\F} $.

  Set $\phi(v;\tau,\xi)= \tau + \a(v) \cdot \xi$ with $\tau^2 + |\xi|^2=1$.
  $\tau$ and $\xi$ are fixed.  Since $\phi(.;\tau,0)=\tau$ has no roots, 
 we can assume that $\xi \neq 0_{\R^d}$.  
  For $j \geq 1$ we have 
   $\partial^{j}_v\phi(v;\tau,\xi)= \a^{(j)}(v) \cdot \xi $.  By definition of 
 $d_\F[v]$ there exists $ j \leq d_\F[v] \leq d_\F$ such that  
$\partial^{j}_v\phi(v;\tau,\xi) \neq 0$.  
 Thus,  we have when $\xi \neq 0$
 \bqr \label{ineqxineqz}
  \dis   \sum_{j=1}^{d_\F} |\partial^{j}_v  \phi(v;\tau,\xi)| > 0.
 \eqr   
 When $\xi=0$, we have  $\tau= \pm 1$ since $\tau^2 + |\xi|^2=1.$
 The function $\phi(v;\pm 1,0)= \pm 1 \neq 0$.  By continuity of this function 
 there exists an open neighborhood $V$ of $(1,0_{\R^d})$ such the function does not vanish on $\overline{V}$.   Set $P$ be the complementary set of $V$ in the 
 unit sphere of $\R^{d+1}$.
  $P$ is compact and \eqref{ineqxineqz} is true on $P$. 
 Now we can use Lemma \ref{Lscompact}  to conclude the first step.

\underline{step 2}: $ \dis  \al_{\sup} \leq \frac{1}{ d_\F} $. 

 Take $\underline{u}$ from Lemma \ref{ldf}.  Then  there exists $\xi \neq 0$
 such that  $\partial^{j}_v\phi(v;\tau,\xi)=0$  for $1 \leq j < d_\F$ 
and $\partial^{j}_v\phi(v;\tau,\xi)\neq 0$ for $j=d_\F$.   For such $\xi \neq 0$,
 we choose  $\tau$ such that $\ph(v)=\phi(v;\tau,\xi)$ vanishes at $v=\underline{u}$. Now, by Lemma \ref{mphi}, the second step is proved.
\\

Finally  $ \dis \frac{1}{ d_\F} \leq  \al_{\sup} \leq \frac{1}{ d_\F} $
 and the proof  is complete with Lemma \ref{ldf}.
 \epro

\subsection{Comparisons with other nonlinear flux definitions}\label{sscomparison}
\smallskip
\alali
There are more general  definitions of nonlinear flux \cite{EE,LPT}. 
But the precise smoothing is  related to  Definition \ref{defLPT} or  Definition \ref{defgnf}
 and the parameter $\al_{\sup}$ or equivalently $d_\F$.
Let us compare theses  definitions with  Definition \ref{defgnf}. It can be useful for other applications.
\medskip\\
In \cite{LPT},  there is a more general definition of   nonlinear flux.
\begin{dfni}{\bf [General Nonlinear Flux \cite{LPT}]} \label{defLPTg}
 A flux $\F$, differentiable on $[-M,M]$ is said to be nonlinear if 
 the degeneracy set 
$$ W(\tau,\xi) =\{|v| \leq M, \, \tau + \F'(v)\cdot \xi =0\} $$ 
 has null Lebesgue measure
for all $(\tau,\xi)$ on the sphere.
\end{dfni}
This definition is of a great importance since this condition implies 
the compactness of the semi-group $\mathcal{S}_t$ associated with  the conservation 
 law $\eqref{1.1}$.
\begin{proposition}[]\label{31LPTg}
Let $\F $  be a smooth  flux in $C^\infty$. 
  Assume $\F$ satisfy  Definition \ref{defgnf} then  
  $\F$ is nonlinear for Definition \ref{defLPTg} but 
 the converse can be  wrong.
\end{proposition}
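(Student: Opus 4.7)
The plan is to obtain the implication directly from Theorem \ref{thmal} and to exhibit a counterexample built around a smooth function that is flat at one point.

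For the forward implication, the cleanest route is to invoke Theorem \ref{thmal}: under Definition \ref{defgnf} the integer $d_\F$ is finite, so $\alpha_{\sup} = 1/d_\F > 0$, and the polynomial bound $|W_\delta(\tau,\xi)| \leq C\delta^{\alpha_{\sup}}$ of Definition \ref{defLPT} holds for every $\delta > 0$. Since the sets $W_\delta(\tau,\xi)$ are nested with intersection $W(\tau,\xi)$, continuity of Lebesgue measure gives $|W(\tau,\xi)| = 0$, which is Definition \ref{defLPTg}. A self-contained alternative would note that, for $\xi \neq 0$, condition \eqref{defnl} prevents $\xi$ from being simultaneously orthogonal to $\a'(v), \dots, \a^{(m)}(v)$, hence $\phi(v) := \tau + \a(v) \cdot \xi$ vanishes to finite order at every $v \in [-M,M]$; then a Peano--Taylor expansion at any Lebesgue density point of the zero set forces all derivatives of $\phi$ to vanish there, a contradiction, so $|\{\phi = 0\}| = 0$.

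For the converse to fail I would take $d = 2$ and
\begin{equation*}
  \F(u) = \left(\frac{u^2}{2}, \Psi(u)\right), \qquad \Psi'(u) = e^{-1/u^2} \text{ for } u \neq 0, \quad \Psi'(0) = 0,
\end{equation*}
so that $\a(u) = (u,\psi(u))$ with $\psi := \Psi'$ smooth and flat at $u = 0$. Then $\a'(0) = (1,0)$ and $\a^{(k)}(0) = (0,\psi^{(k)}(0)) = (0,0)$ for every $k \geq 2$, so the span of these vectors stays one-dimensional and Definition \ref{defgnf} fails at $\underline{u} = 0$.

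The main technical step is to check Definition \ref{defLPTg} for this flux. For $(\tau,\xi_1,\xi_2)$ on the unit sphere of $\R^3$, set $h(v) = \tau + \xi_1 v + \xi_2 \psi(v)$. The point is that $\psi$ is real-analytic on each of $(-M,0)$ and $(0,M)$, so $h$ is too, and on each side of $0$ either $h \equiv 0$ or $h$ has only isolated zeros. The identically-zero alternative can be ruled out by differentiating and using $\psi'(v) \to 0$ as $v \to 0^\pm$, together with the fact that $\psi$ is nowhere affine on a subinterval of $(0,M)$ or $(-M,0)$; the degenerate case $\xi_1 = \xi_2 = 0$ forces $\tau = \pm 1$ and $h \neq 0$ everywhere, while the flat point $v = 0$ contributes at most one further zero. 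Hence $W(\tau,\xi)$ is at most countable, so of Lebesgue measure zero, and Definition \ref{defLPTg} holds. The main obstacle to anticipate is handling the mixed case $\xi_1\xi_2 \neq 0$ cleanly across $v = 0$, where analyticity breaks down and one must argue separately on each side.
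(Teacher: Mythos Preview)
Your forward implication matches the paper's: the paper also routes through Theorem \ref{thmal} (via Definition \ref{defLPT}) and, as an alternative, gives the same direct argument that for $\xi \neq 0$ the function $\phi(v)=\tau+\a(v)\cdot\xi$ vanishes only to finite order at each point of $[-M,M]$, so its zero set is finite.

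Your counterexample is \emph{different} from the paper's but equally valid. The paper takes $\a(u)=\exp(-1/u^2)(u,u^2,\dots,u^d)$; the point of this choice is that on $\R\setminus\{0\}$ one may divide the equation $\tau+\a(v)\cdot\xi=0$ through by $\exp(-1/v^2)$, reducing the analysis to the zero set of $h(v)=\tau\exp(1/v^2)+\xi\cdot(v,\dots,v^d)$, which is manifestly analytic and non-trivial on $\R^*$ whenever $\tau\neq 0$, while for $\tau=0$ the problem collapses to the standard genuinely nonlinear flux. Your choice $\a(u)=(u,e^{-1/u^2})$ in $d=2$ instead keeps the flat factor inside $h$ and requires the extra step of ruling out $h\equiv 0$ on a half-interval; your argument for this (differentiate, send $v\to 0^\pm$ to force $\xi_1=0$, then use $\psi'\neq 0$ on $\R^*$ to force $\xi_2=0$) is correct, and the remaining conclusion that an analytic, not-identically-zero function on each half-line has a countable zero set is standard. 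Either example works; the paper's buys a shorter case analysis, yours stays closer to the minimal $d=2$ setting.
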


\bpro 
  Lemma \ref{ldf} and 
  Theorem \ref{thsgo} show that nonlinearity of  Definition \ref{defgnf} 
 implies   nonlinearity of  Definition \ref{defLPT} and then of 
Definition \ref{defLPT}.  
 But we can give a direct proof  from 
 Lemma 2.5 and remark (2.3) p. 447 in \cite{CJR}, (see also \cite{COW} p. 84).

 Notice that $ W(\tau,0)=\emptyset$  since $\tau=\pm 1$.
 So we assume  that $\xi\ne 0$. Set $\phi(v)=\tau + \F'(v)\cdot \xi$.
 Since $\phi^{(k)}(v)=\F^{(k+1)}(v)\cdot \xi$, for any $v$,
 there exists $k>0$ such that 
 $\phi^{(k)}(v)\neq 0$ by Definition \ref{defgnf}. 
 So the roots of $\phi$ are isolated and the set $ W(\tau,\xi)$ is finite.
 
Conversely  the counter-example
$\F'(u) = \exp(-1/u^2)(u,u^2,\cdots,u^{d})$   does not 
 satisfies Definition \ref{defgnf}
since $d_\F[0]=+\infty$. 
\\
But $\F$ satisfies Definition \ref{defLPTg}.
Indeed, with $h(v)= \tau \exp(1/v^2) + \xi\cdot(v,v^2,\cdots,v^{d}) $,
 the  set $ W(\tau,\xi) - \{0\}$ is the set of roots of $h(.)$.    
 If $\tau=0$,
 we deal with the genuine nonlinear flux from Definition \ref{def31} and 
 the degeneracy set $ W(\tau,\xi)$ is a null set. Indeed,  it is finite. 
 If $\tau \neq 0$, $h(.)$ is analytic and non trivial  on $\R^*$. 
 Consequently   $ W(\tau,\xi)$ is countable and also a null set 
 which concludes the proof.
\epro

Engquist and E in \cite{EE} gave another definition of strictly nonlinear flux 
 generalizing  Tartar \cite{T}.

\begin{dfni}{\bf [ Strictly Nonlinear Flux \cite{EE}]} \label{defEE}
\\  Let $M$ be  a positive constant, 
   and  $\F : [-M,M] \rightarrow \R^d$ be 
   a function  twice differentiable on $[-M,M]$. 
\\
$F$ is said to be  
{\rm strictly nonlinear} on $[-M,M]$  if    for any 
 sub-interval $I$ of $[-M,M]$, the functions $F_1'',\cdots,F_d^{''}$ are linearly independent on $I$,
\\ i.e.,
  for any  constant vector $\xi$, if  $\xi \cdot \F''(u) =0$ for all $u \in I$
   then $\xi = 0$.
\end{dfni}

\begin{proposition}[]\label{31EE}
Let $\F$  be a   $C^\infty([-M,M],\R^d)$ flux. 
 Assume  $\F$ satisfying  Definition \ref{defgnf}, 
  then 
  $\F$ satisfies Definition \ref{defEE} but 
 the converse is wrong. 
\end{proposition}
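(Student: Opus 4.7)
The plan is to prove the direct implication by a short differentiation argument, and to reuse the exotic flux already exhibited in Proposition \ref{31LPTg} as a counter-example for the converse.

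\textbf{Forward implication.} I argue by contrapositive: assume Definition \ref{defEE} fails, so that there exist a nonzero vector $\xi \in \R^d$ and a sub-interval $I \subset [-M,M]$ on which $\xi \cdot \F''(u) = 0$ identically. Since $\F \in C^\infty$, I differentiate this identity $k-2$ times in $u$ to obtain $\xi \cdot \F^{(k)}(u) = 0$ on $I$ for every $k \geq 2$; equivalently, since $\a = \F'$, one has $\xi \cdot \a^{(k)}(u) = 0$ for every $k \geq 1$ and every $u \in I$. Fixing any $u_0 \in I$, the family $\{\a^{(k)}(u_0)\}_{k \geq 1}$ then lies in the hyperplane $\xi^\perp$ of dimension $d-1$, so $rank\{\a'(u_0), \ldots, \a^{(m)}(u_0)\} \leq d-1$ for every $m \geq 1$, which contradicts \eqref{defnl} at $u_0$.

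\textbf{Counter-example for the converse.} I reuse the flux from Proposition \ref{31LPTg}, namely the smooth map $\F$ defined by
\[
\F'(u) = e^{-1/u^2}(u, u^2, \ldots, u^d) \text{ for } u \neq 0, \qquad \F'(0) = 0.
\]
Every derivative of $\F$ vanishes at $u = 0$, so $d_\F[0] = +\infty$ and \eqref{defnl} fails at $u = 0$; consequently Definition \ref{defgnf} does not hold. To verify Definition \ref{defEE}, suppose $\xi \cdot \F''(u) = 0$ on some sub-interval $I$. After replacing $I$ by its intersection with $(0,+\infty)$ or $(-\infty, 0)$, I may assume $0 \notin \overline{I}$. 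A direct computation yields $\F_i''(u) = e^{-1/u^2}\bigl(2 u^{i-3} + i\, u^{i-1}\bigr)$, and since the exponential factor is strictly positive on $I$, the Laurent polynomial $\sum_{i=1}^{d} \xi_i \bigl(2 u^{i-3} + i\, u^{i-1}\bigr)$ must vanish on $I$, hence on all of $\R^*$ by real-analytic continuation. Matching coefficients of $u^{-2}$ and $u^{-1}$ forces $\xi_1 = \xi_2 = 0$, and the recursion $2\xi_{j+3} + (j+1)\xi_{j+1} = 0$ for $j \geq 0$ propagates to $\xi = 0$. Thus Definition \ref{defEE} holds while Definition \ref{defgnf} does not.

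\textbf{Main obstacle.} The forward direction is essentially a one-line differentiation argument. The only real bookkeeping is in the counter-example: the two monomial families $\{u^{i-3}\}$ and $\{u^{i-1}\}$ overlap on the range $0 \leq k \leq d-3$, but the lowest powers $u^{-2}$ and $u^{-1}$ are each hit by a single term, supplying the base of a clean induction that pins every $\xi_i$ to zero.
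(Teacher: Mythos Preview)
Your forward implication is essentially the paper's argument in contrapositive form: differentiate $\xi\cdot\F''=0$ on $I$ to trap all $\a^{(k)}(u_0)$ in the hyperplane $\xi^\perp$, contradicting \eqref{defnl}. No difference there.

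For the converse you take a genuinely different route. The paper chooses a fresh flux with $\F''(u)=e^{-1/u^2}(1,u,\ldots,u^{d-1})$, so that after dividing out the exponential one is left with the monomials $1,u,\ldots,u^{d-1}$, whose linear independence on any interval is immediate. You instead recycle the flux $\F'(u)=e^{-1/u^2}(u,\ldots,u^d)$ from Proposition \ref{31LPTg}, which forces you to compute $\F_i''(u)=e^{-1/u^2}(2u^{i-3}+i\,u^{i-1})$ and then run the coefficient-matching induction on the mixed Laurent family $\{2u^{i-3}+i\,u^{i-1}\}_{i=1}^d$. Your induction is correct: the lowest powers $u^{-2},u^{-1}$ isolate $\xi_1,\xi_2$, and the recursion $2\xi_{j+3}+(j+1)\xi_{j+1}=0$ propagates zeros up to $\xi_d$. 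The trade-off is clear: the paper's choice makes Definition \ref{defEE} a one-liner but introduces a new example; your choice reuses an existing example at the cost of a small linear-algebra computation. Two minor remarks: you only need $0\notin I$ (not $0\notin\overline{I}$) for the exponential factor to be nonvanishing, and the appeal to real-analytic continuation is unnecessary since a finite combination of distinct integer powers of $u$ vanishing on any non-degenerate interval already forces all coefficients to zero.
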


\begin{proof} 
 Assume  $\xi \cdot \F''=0$ on a open  sub-interval  $I$. Let $u$ belong in $I$. 
Hence $\xi \cdot \F^{k}(u)=0$   for all $k\geq 2$. But $\F$ satisfies  Definition \ref{defgnf}.  It follows that $\xi =0$.

Conversely take a flux $\F$ such that  
$\F''(u) = \exp(-1/u^2)(1,u,\cdots,u^{d-1})$ . 
Obviously $\F$ satisfies Definition \ref{defEE}.
But $\F$  does not  satisfies Definition \ref{defgnf}
since $d_\F[0]=+\infty$. 
\end{proof}

In  the same way, if   $\F$ satisfies  Definition \ref{defLPTg}  then 
  $\F$ satisfies Definition \ref{defEE}.

For analytic flux, the situation is simpler. 

\begin{proposition}[Analytic nonlinear flux]
Assume  the flux to be  an  analytic function. 
 All previous Definitions \ref{defLPT}, \ref{defgnf}, \ref{defLPTg}, \ref{defEE} are equivalent.
 
\end{proposition}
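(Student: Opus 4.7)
The plan is to close the chain of implications for analytic flux. The implications
$\ref{defgnf}\Rightarrow\ref{defLPT}$ (Theorem \ref{thmal} gives $\alpha_{\sup}=1/d_\F>0$),
$\ref{defLPT}\Rightarrow\ref{defLPTg}$ (letting $\delta\to 0$ in \eqref{defal} forces
$|W(\tau,\xi)|=0$), and $\ref{defgnf}\Rightarrow\ref{defEE}$ (Proposition \ref{31EE}) hold
without any analyticity hypothesis and were already proved above. It therefore suffices
to establish the reverse implications $\ref{defLPTg}\Rightarrow\ref{defgnf}$ and
$\ref{defEE}\Rightarrow\ref{defgnf}$ under the extra assumption that $\F$ is real-analytic
on $[-M,M]$.

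The core step exploits analyticity precisely at the place where the counterexamples of
Propositions \ref{31LPTg} and \ref{31EE} used flat functions such as $\exp(-1/u^2)$.
Assume $\F$ does not satisfy Definition \ref{defgnf}. By Lemma \ref{ldf}, this means
there exists $\underline{u}\in I$ with $d_\F[\underline{u}]=+\infty$, i.e.\ the family
$\{\a^{(k)}(\underline{u})\}_{k\geq 1}$ spans a proper subspace of $\R^d$. Pick a nonzero
$\xi\in\R^d$ orthogonal to this span and consider $g(v)=\xi\cdot\a(v)=\xi\cdot\F'(v)$.
Then $g$ is real-analytic on $I$ and $g^{(k)}(\underline{u})=0$ for every $k\geq 1$.
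Connectedness of $I$ together with the identity theorem for analytic functions forces
$g$ to be constant on the whole interval $I$; denote this constant value by $c$.

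From this single rigidity statement I would derive the failure of both remaining definitions.
For Definition \ref{defLPTg}, set $\tau_0=-c$ and rescale $(\tau_0,\xi)/\sqrt{\tau_0^2+|\xi|^2}$
onto the unit sphere (legal since $\xi\neq 0$); then $\tau_0+\a(v)\cdot\xi\equiv 0$ on $I$,
so the degeneracy set $W(\tau_0,\xi)$ contains $[-M,M]$ and has positive Lebesgue measure,
contradicting \ref{defLPTg}. For Definition \ref{defEE}, differentiating the identity
$g\equiv c$ gives $\xi\cdot\F''(v)=0$ for every $v\in I$, so $F_1'',\ldots,F_d''$ are
linearly dependent on all of $I$, contradicting \ref{defEE}. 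Contrapositive of each of
these gives the missing implication, and the four definitions collapse to a single notion
in the analytic category.

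I do not expect any real obstacle: the argument is essentially a rigidity lemma for
analytic functions glued to the equivalences already established. The only minor point
to verify is the renormalization of $(\tau_0,\xi)$ onto the unit sphere, which is harmless
because $\xi\neq 0$ by construction; and the use of the identity theorem on the connected
compact interval $[-M,M]$, which is standard.
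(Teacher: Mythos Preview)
Your proposal is correct and follows essentially the same route as the paper: both argue by contraposition from the failure of Definition~\ref{defgnf}, extract a nonzero vector $\xi$ orthogonal to all $\a^{(k)}(\underline{u})$, and then invoke the identity theorem for real-analytic functions to propagate $\xi\cdot\F''\equiv 0$ to the whole interval, from which the failures of Definitions~\ref{defEE} and~\ref{defLPTg} follow (the latter after integrating and normalizing $(\tau,\xi)$). One small remark: your citation of Lemma~\ref{ldf} is misplaced, since that lemma assumes $\F$ \emph{is} nonlinear in the sense of Definition~\ref{defgnf}; the existence of $\underline{u}$ with $d_\F[\underline{u}]=+\infty$ is simply the direct negation of Definition~\ref{defgnf} and needs no lemma.
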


\begin{proof} 
Again we use Definition \ref{defgnf}. There are two cases. 
\begin{enumerate}
\item If $\F$ is nonlinear  for Definition \ref{defgnf}. 
   By Theorem \ref{thsgo}, Propositions \ref{31LPTg} and \ref{31EE}, 
   $\F$ is nonlinear for other definitions.
\item If $\F$ is not  nonlinear for Definition \ref{defgnf}.
      By Theorem \ref{thsgo}, $\F$ does not satisfy Definition \ref{defLPT}.
 
   Let $u$ be fixed. There exists an hyperplane $H$ such that all derivatives 
 $\F^{(k)}(u) \in H$ for all $k\geq 2$, i.e. there exists $\xi \neq 0$ 
 such that $\xi \cdot \F^{(k)}(u) =0$ for all $k\geq 2$. Using the power series
 expansion of $\F''$ 
near $u$ we see that $\F''$ stays in $H$ near $u$. 
And by the unique analytic extension 
of  $\F''$, $\F''$ stays always in $H$, i.e. $\xi \cdot \F''=0$ everywhere. 
 Thus $\F$ does not satisfies Definition \ref{defEE}.

Integrating the relation  $\xi \cdot F''=0$ we have $\tau + \xi \cdot \F'=0$
for some contant $\tau$. Dividing the relation by $\sqrt{\tau^2+ |\xi|^2}$
 we can assume that $\tau^2+ |\xi|^2=1$. Hence 
$\F$ does not satisfies Definition \ref{defLPTg}.
\\
We incidentally check that Definition  \ref{defLPTg} 
implies Definition \ref{defEE}.
  
\end{enumerate}
\end{proof}

For less smooth flux  we refer to the works of E. Yu. Panov (\cite{Pa95,Pa10}). 

\section{Sobolev estimates \label{sSE} }
\smallskip

In this section, uniform  and optimal Sobolev exponents of  
the family of  highly oscillating solutions
 from Theorem \ref{Tpgamma} are investigated.

\begin{thm}
{\bf [
      Sobolev exponent for highly oscillating solutions]}
\label{Propse}
\alali
 Let $\ue$  be the  $C^1([0,T_0]\times \R^d)$  oscillating solutions given in Theorem \ref{Tpgamma}.
\\
 For all $  1 \leq p < + \infty$, 
the family   $(\ue)_{0 < \eps \leq 1}$ is  uniformly bounded  in  
\bqre  
C^0([0,T_0],W^{s,p}_{loc}(\R^d,\R))
 \quad  \cap \quad  W^{s,p}_{loc}([0,T_0] \times \R^d,\R)    &
  \mbox{
 with }\dis s= \frac{1}{\gamma}.
\eqre
Furthermore, if  $U_0$  is   a {\rm non constant} function, 
then for  all $s > 1/\gamma$ 
the sequence $(\ue)_{0 < \eps <1}$ is unbounded in 
 $ C^0([0,T_0],W_{loc}^{s,p}(\R^d,\R))$  and in  $ W^{s,p}_{loc}([0,T_0]\times\R^d,\R) $.
\end{thm}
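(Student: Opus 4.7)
The plan is to split $u_\eps$ into a leading oscillatory part and a $C^1$-negligible remainder, then to reduce the resulting Sobolev estimates to a one-dimensional scaling computation that is explicit in $\eps$. Define $v_\eps(t,\x) := \eps\, U\!\left(t,\, \phi(t,\x)/\eps^\gamma\right)$ and $R_\eps := u_\eps - \underline{u} - v_\eps$. Theorem \ref{Tpgamma} gives $\|R_\eps\|_{C^1([0,T_0]\times \R^d)} = O(\eps^{1+r})$ with $r>0$, so on any compact set $R_\eps \to 0$ in $W^{1,p}$, hence in every $W^{s,p}$ with $s\leq 1$. Since $s = 1/\gamma < 1$, $R_\eps$ plays no role and one only needs to estimate $v_\eps$.

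Next I would exploit the linearity of the phase $\phi(t,\x) = \vv\cdot(\x - t\,\a(\underline{u}))$. An orthogonal spatial change of variables aligning one axis with $\vv$ makes $v_\eps$ depend only on $t$ and a single spatial coordinate $y_1$:
\begin{equation*}
 v_\eps(t,\y) \;=\; \eps\, U\!\left(t,\, \frac{|\vv|\, y_1 - t\,\vv\cdot \a(\underline{u})}{\eps^\gamma}\right).
\end{equation*}
Integrating out the $d-1$ transverse coordinates in the Gagliardo seminorm reduces the spatial $W^{s,p}$-estimate on a cube to the $W^{s,p}$-estimate of the one-dimensional profile $f_\eps(t,y_1) := v_\eps(t,\y)$ on a bounded interval. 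The same reduction works in the $(d+1)$-dimensional space-time setting after the affine change $\tau = t$, $z = \vv\cdot \x - t\,\vv\cdot \a(\underline{u})$, producing a two-dimensional function $w_\eps(\tau,z) = \eps\, U(\tau, z/\eps^\gamma)$.

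The core computation is the following scaling. Using the equivalent characterization $[f]_{W^{s,p}}^p \asymp \int_0^\infty h^{-1-sp}\, \omega_p(f,h)^p\, dh$ (with $\omega_p$ the $L^p$ modulus of continuity), periodicity of $U(t,\cdot)$ together with the substitution $k = |\vv|\, h/\eps^\gamma$ yields
\begin{equation*}
 [f_\eps(t,\cdot)]_{W^{s,p}}^p \;\asymp\; \eps^{p(1-\gamma s)}\, I(t), \qquad I(t) \;:=\; \int_0^\infty k^{-1-sp}\, \omega_p\!\bigl(U(t,\cdot),\, k\bigr)^p\, dk.
\end{equation*}
Since $U \in C^1([0,T_0]\times \R/\Z)$, one has $\omega_p(U(t,\cdot),k) \lesssim \min(k,1)$, so $I(t)$ is bounded above uniformly in $t$ (integrability at $0$ uses $p(1-s)>0$, at $\infty$ uses $sp>0$). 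When $U_0$ is non-constant, $[U_0]_{W^{s,p}(\R/\Z)} > 0$ and hence $I(0) > 0$; $C^1$-continuity of the flow of \eqref{eqUpgamma} (shrinking $T_0$ if necessary) then gives a uniform positive lower bound $\inf_{t \in [0,T_0]} I(t) > 0$. Thus $[f_\eps(t,\cdot)]_{W^{s,p}} \asymp \eps^{1-\gamma s}$ uniformly in $t$.

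Setting $s = 1/\gamma$ gives uniform boundedness, while $s > 1/\gamma$ forces blow-up as $\eps \to 0$. Applied to the two-dimensional function $w_\eps(\tau,z)$, the same scheme splits the Gagliardo seminorm into a $\tau$-contribution of size $O(\eps^p)$ and a $z$-contribution of size $\eps^{p(1-\gamma s)}$; the latter dominates for $s > 0$ and yields the $W^{s,p}_{loc}([0,T_0]\times \R^d)$ assertion. The most delicate step is the uniform positivity of $I(t)$ in the lower bound; I would anchor it at a fixed test scale $k_0$ on which $\omega_p(U_0, k_0) > 0$ and transport this estimate by continuity of the flow of \eqref{eqUpgamma} on the short interval $[0,T_0]$.
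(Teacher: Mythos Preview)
Your proposal is correct and follows the same overall strategy as the paper: isolate the leading oscillation $\eps\, U(t,\phi/\eps^\gamma)$ via the $C^1$ remainder from Theorem~\ref{Tpgamma}, straighten the linear phase by a change of coordinates, reduce to a one-dimensional (resp.\ two-dimensional for space-time) scaling computation, and read off the two-sided estimate $\eps^{1-\gamma s}$. The paper carries this out through Lemmas~\ref{Losc1D}--\ref{LoscD2}. The main technical difference is the tool: the paper works directly with the Gagliardo double integral and therefore needs a separate lemma (Lemma~\ref{lemd1}) to show that integrating out the transverse variables preserves two-sided bounds, together with a convex-inequality argument $|Q+R|^p \geq |Q|^p - p|Q|^{p-1}|R|$ for the space-time lower bound in Lemma~\ref{Losc21}. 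You instead invoke the equivalent characterization of $W^{s,p}$ by coordinate-direction increments (or the $L^p$ modulus of continuity), which makes both the dimension reduction and the $\tau$/$z$ splitting immediate. This is cleaner, but you should be explicit that you are using that characterization, since under the raw Gagliardo integral neither ``integrating out transverse coordinates'' nor ``splitting into $\tau$- and $z$-contributions'' is a one-liner. One minor point: you need not shrink $T_0$ for the lower bound --- the paper notes that by the method of characteristics $U(t,\cdot)$ has the same range as $U_0$ and is therefore non-constant for every $t\in[0,T_0]$, so $I(t)>0$ on the whole interval.
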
 
 The Theorem means that the Sobolev exponent $\dis s= \frac{1}{\gamma}$ is {\bf optimal}.  
 It is easily  seen that the sequence $(\ue)_{0 < \eps}$ is uniformly bounded in  $W^{1/\gamma,p}_{loc}$ by interpolation (see remark  \ref{rinter} below). 
The difficult part of the theorem is the optimality. That is to say the sequence is unbounded for too large $s$. 
For this purpose we need to get lower bound of Sobolev norms. Unfortunately, interpolation theory only gives upper bounds.
Thus we use the intrinsic norm. It is rather elementary but quite long to achieve such lower bounds.   
All this section is essentially devoted to compute these  lower bounds to highlight  the conjecture about the maximal smoothing effect in the next section.

Indeed, it is proved below that $\ue$ 
has  order of  $\eps^{1 -s \g}$ in $W^{s,p}_{loc}$ for any $s \in [0,1[$.
\\
The case $p=1$ is the most important, since $L^1$ norm plays an important 
 role for conservation laws. The Sobolev estimates  
 of the initial data are propagated by the semi-group $\mathcal{S}_t$,
 (see \cite{LPT} for $p=1$ and also \cite{PW} for $TV(|\ue - \underline{u}|^s)$). 
 A key point is there is no improvement of the Sobolev exponent 
  of the  family of initial data.

 The  basic idea of the proof is that the  sequence of exact solutions
 $(\ue)_{0<\eps \leq 1}$
 and    the sequence of  approximate oscillating solution given by  
 $\dis \underline{u} + \eps U\left(t,\frac{\phi(t,\x)}{\eps^\g} \right)$
 have similar bounds in Sobolev spaces.


 We  use the  $W^{s,p}$ intrinsic semi-norm instead  the interpolation theory as we explained before. 
 More precisely, following semi-norms    parametrized by
 $Q=Q_d(\x_0,A) = \x_0 + ]-A,A[^d$,   
  where  $A>0$, $\x_0 \in \R^d$,   are used 
to estimate fractional  derivatives 
 in $W^{s,p}_{loc}(\R^d,\R)$ (\cite{Adams}).
\bqre\dis   | V |^p_{\dot{W}^{s,p}(Q_d(\x_0,A))} 
             & =&  
  \dis  \int_{Q_d(\x_0,A))}  \int_{Q_d(\x_0,A))}
\frac{|V(\x)-V(\y)|^p}{|\x - \y|^{d+s p}}d\x d\y.
             \eqre
 Following classical Definitions are used in this section.
\begin{dfni}{\bf [ Estimates in $W^{s,p}_{loc}(\R^d)$]} \label{def31}
\alali
 (i) $u$ is said to be bounded in $W^{s,p}_{loc}(\R^d)$ if  
 \bqre
    \forall \x_0 \in \R^d, \exists A > 0, \exists C \geq 0,&
 \\ 
      \|u\|_{W^{s,p}(Q_d(\x_0,A))} =  & \|u\|_{L^{p}(Q_d(\x_0,A))}+ |u|_{\dot{W}^{s,p}(Q_d(\x_0,A))} \leq  C.
\eqre  
(ii) $\dis (\ue)_{0<\eps \leq 1} $ is said to be bounded in $W^{s,p}_{loc}(\R^d)$ if  
 $$
\forall \x_0 \in \R^d, \exists A > 0, \exists C \geq 0,   
   \forall \eps \in ]0,1], 
      \|\ue \|_{W^{s,p}(Q_d(\x_0,A))}  \leq  C.
$$  
(iii)  Let $\beta \geq 0$, 
   $\dis (\ue)_{0<\eps \leq 1} $ has order  of $\eps^{-\beta}$ in $W^{s,p}_{loc}(\R^d)$,
     denoted by 
  \bqre  & \label{order} \ue  \simeq  \eps^{-\beta}, \\ 
      \mbox{if } &  
                     \forall \x_0 \in \R^d, \exists A > 0, \exists C\geq 1, 
                   \exists \eps_0 \in ]0,1], \forall \eps \in ]0,\eps_0], \\    
       & C^{-1} \; \eps^{-\beta} \leq   \|\ue \|_{W^{s,p}(Q_d(\x_0,A))} 
       \leq  C \; \eps^{-\beta}.
    \eqre 
\end{dfni}
As usual  if  $u$ is bounded in $W^{s,p}_{loc}(\R^d)$  then 
 for any cube  $Q$, $u$ belongs to
 $ W^{s,p}(Q)$.
By  the same way  if $ \ue \simeq  \eps^{-\beta}$ in $W^{s,p}_{loc}(\R^d)$ then
for any cube $Q$ there exists a constant $C \geq 1$ 
and  $ \eps_0 \in ]0,1] $ such that for all $0 < \eps \leq \eps_0$,
$ \dis 
 C^{-1} \; \eps^{-\beta} \leq   \|\ue \|_{W^{s,p}(Q} 
       \leq  C \; \eps^{-\beta}.
$  
\medskip \\

Since solutions of $\eqref{1.1}$ are bounded in $L^\infty$, 
the key point is to focus on fractional derivatives. 
For convenience  $|\x| = |x_1| + \cdots + |x_d|$ 
 and  semi-norms 
 \bqre\dis   | V |^p_{\dot{\widetilde{W}}^{s,p}(Q_d(\x_0,A))} 
             & =&   \dis  
              \int_{Q_d(0,A)}  \int_{Q_d(\x_0,A)} \frac{|V(\x+\h)-V(\x)|^p}{|\h|^{d+sp}}d\x d\h, 
\eqre  
%
are also  used.
 Notice that  
 \bqre | V |_{\dot{W}^{s,p}(Q_d(\x_0,A/2))}  \leq 
         & | V |_{\dot{\widetilde{W}}^{s,p}(Q_d(\x_0,A))}  
         &\leq| V |_{\dot{W}^{s,p}(Q_d(\x_0,2A))} .
 \eqre
 Furthermore,  
  $   | V |_{\dot{\widetilde{W}}^{s,p}(Q_1(\x_0,A))} = | V |_{\dot{W}^{s,p}(Q_1(\x_0,A))} $
 when $V$ is periodic with period 
 $A$ (or $A/2$). 
 Thus, these semi-norms can be useful to estimate bounds in $W^{s,1}_{loc}$.
\\ 


The simplest example of  high frequency oscillating functions     with optimal estimates in Sobolev spaces is investigated in the following Lemma. 
The remainder of the section is devoted to get the same estimates for the the family of  highly oscillating solutions
 from Theorem \ref{Tpgamma}. 
\begin{lem}{\bf [Highly oscillating periodic function on $\R$]} \label{Losc1D}
\alali 
  Let $v$ belong to  $W^{s,p}_{loc}(\R, \R)$, 
$ \gamma > 0$, 
  and for all $0 < \eps \leq 1 $,
\bqre \Ve(\theta) & = &  v(\eps^{-\gamma}\theta).\eqre  
If  $v(.)$ is  a {\rm non constant } periodic function 
  then 
  \bqre \Ve   & \simeq  & \eps^{- s   \gamma}  
\qquad  \mbox{  in } W^{s,p}_{loc}(\R) .
  \eqre
Furthermore, if   $ \Ve(\theta)  =   \ve(\eps^{-\gamma}\theta) $, $\ve$ is one periodic, 
 and  $\ve \to v $ in $C^1$  
 then  $\Ve \simeq   \eps^{- s   \gamma}  
 \mbox{  in } W^{s,p}_{loc}(\R) . $
\end{lem}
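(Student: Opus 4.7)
\medskip

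\textbf{Proof plan.} The strategy is to compute the homogeneous semi-norm by the single scaling $\theta = \eps^\gamma \tilde\theta$, $h = \eps^\gamma \tilde h$ in the intrinsic form $\dot{\widetilde W}^{s,p}$ introduced just above the statement, and then to use the $1$-periodicity of $v$ to collapse the resulting double integral into the length of the rescaled window times a one-dimensional integral that depends only on one period of $v$. The sandwich
\[
|V|_{\dot{W}^{s,p}(Q_1(\theta_0,A/2))} \le |V|_{\dot{\widetilde W}^{s,p}(Q_1(\theta_0,A))} \le |V|_{\dot{W}^{s,p}(Q_1(\theta_0,2A))}
\]
recorded in the excerpt then automatically transfers the equivalence back to the usual semi-norm, while the trivial uniform $L^p$ bound on $\Ve$ (from $v\in L^\infty_{\loc}$) absorbs the $L^p$ part of the full $W^{s,p}$-norm.

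The change of variables turns the intrinsic semi-norm into
\[
|\Ve|_{\dot{\widetilde W}^{s,p}(Q_1(\theta_0,A))}^{\,p} = \eps^{-\gamma s p}\!\int_{-\tilde A}^{\tilde A}\!\int_{\tilde\theta_0-\tilde A}^{\tilde\theta_0+\tilde A} \frac{|v(\tilde\theta+\tilde h)-v(\tilde\theta)|^p}{|\tilde h|^{1+sp}}\, d\tilde\theta\, d\tilde h, \qquad \tilde A=A\eps^{-\gamma}.
\]
Introducing the single-period quantity $g(\tilde h) = \int_0^1 |v(\tau+\tilde h)-v(\tau)|^p\,d\tau$, the periodicity of $v$ pinches the inner integral between $(2\tilde A \pm 1)\, g(\tilde h)$, so
\[
|\Ve|_{\dot{\widetilde W}^{s,p}(Q_1(\theta_0,A))}^{\,p} = 2A\,\eps^{-\gamma s p}\, K(\tilde A)\,(1+o(1)), \qquad K(R) := \int_{-R}^{R} \frac{g(\tilde h)}{|\tilde h|^{1+sp}}\, d\tilde h.
\]
The hypothesis $v \in W^{s,p}_{\loc}$, applied to one period and combined with the $L^\infty$ bound on $v$ together with $sp>0$, yields $K(\infty) < +\infty$. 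The non-constancy of $v$, combined with the continuity of $g$ and the fact that $g(\tilde h_0)>0$ whenever $v(\cdot+\tilde h_0)\not\equiv v(\cdot)$, yields $K(\infty)>0$. Letting $\eps \to 0$ forces $\tilde A \to \infty$, hence $K(\tilde A) \to K(\infty) \in (0,+\infty)$, which gives the matched upper and lower bounds $\Ve \simeq \eps^{-s\gamma}$ in $W^{s,p}_{\loc}(\R)$.

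For the perturbed variant I would write $\Ve(\theta) = v(\eps^{-\gamma}\theta) + (\ve - v)(\eps^{-\gamma}\theta)$. The first summand is covered by the previous step. Applying only the upper-bound half of the same scaling to the $1$-periodic function $\ve-v$ gives $|(\ve-v)(\eps^{-\gamma}\cdot)|_{\dot W^{s,p}(Q_1(\theta_0,A))} \le C\,\eps^{-s\gamma}\,\|\ve-v\|_{W^{s,p}(0,1)}$, which is $o(\eps^{-s\gamma})$ because $\ve \to v$ in $C^1$, so this perturbation is absorbed by the leading order and $\Ve$ keeps the order $\eps^{-s\gamma}$. The main obstacle is really the simultaneous verification of the two properties of $K(\infty)$: integrability at infinity is immediate from boundedness of $v$ and $sp>0$; integrability near the origin is exactly the statement that the single-period restriction of $v$ belongs to $W^{s,p}$; and positivity is the only point in the whole argument where the non-constancy assumption is actually used.
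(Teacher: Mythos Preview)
Your proposal is correct and follows essentially the same route as the paper: the same rescaling $\theta=\eps^{\gamma}\tilde\theta$, $h=\eps^{\gamma}\tilde h$, the same single-period function (the paper calls your $g$ by the name $Var$), the same periodicity-based sandwich $(2\tilde A\pm 1)\,g(\tilde h)$ for the inner integral, and the same conclusion via $K(\tilde A)\to K(\infty)\in(0,+\infty)$ (the paper's $D_{B_\eps}\to D_\infty$). The only genuine difference is in the ``furthermore'' part: the paper simply says that the same computations go through with $v$ replaced by $\ve$ (the constants $D_B[\ve]$ depend continuously on $\ve$ in $C^1$), whereas you split $\Ve=v(\eps^{-\gamma}\cdot)+(\ve-v)(\eps^{-\gamma}\cdot)$ and bound the second summand as $o(\eps^{-s\gamma})$; both arguments are valid and essentially equivalent.
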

\noindent
Notice that the magnitude of $\Ve$ in $W^{s,p}_{loc}$ is independent of $p$.\\
Notice also that  if $\ve \to v $ in $W^{s,p}_{loc}$  
 then  $\ve(\eps^{-\gamma}\theta) \simeq   \eps^{- s   \gamma}  
 \mbox{  in } W^{s,p}_{loc}(\R) . $
 
\medskip
\bpro  In all the sequel one sets $x_0=0$  in  Definition \ref{def31}
 since computations are invariant under translation. 
 \\ 
     First the $L^1_{loc}$ norm is easily bounded  in \cite{CJR}. 
  Let  $A >  1/2$, $X=\eps^{-\gamma}x$, $\Be=\eps^{-\gamma}A$,  
 $\Ne$ the integer such that $ \Ne \leq 2\Be < \Ne+1$ so 
 $2A-1\leq 2A -\eps^\g \leq \eps^{\gamma}\Ne \leq 2A$.   
\bqre 
       \| \Ve \|^p_{L^p([-A,A])} & =& 
         \dis  \int_{-A}^A | \Ve(x)|^p dx   
         =  \dis \eps^{-\gamma} \int_{-\Be}^{\Be} | v(X)|^p dX  \\
               & = &  
 \dis \eps^{-\gamma} \left(  \sum_{k=1}^{\Ne}\int_{-\Be + k-1}^{-\Be +k} | v(X)|^p dX   
        + \int_{-\Be + \Ne}^{\Be} | v(X)|^p dX  \right) \\
  & = &  
 \dis \eps^{-\gamma} \Ne \int_{0}^{1} | v(X)|^p dX   
        +  \eps^{-\gamma} \int_{-\Be + \Ne}^{\Be} | v(X)|^p dX  . \\
\eqre
Finally one has
\bqr  \label{lpel}
          \| \Ve \|_{L^p([-A,A])} 
              & \leq  &  
         (2A+1)^{1/p} \|v\|_{L^p([0,1])},  \\
      \label{lpeg}
  \| \Ve \|_{L^p([-A,A])} 
              & \geq  &  (2A-1)^{1/p} \|v\|_{L^p([0,1])}
            \\
  \nonumber 
 \| \Ve \|_{L^p([-A,A])}  
    & \sim   &
         (2A)^{1/p} \|v\|_{L^p([0,1])} \quad \mbox{ when } \eps \to 0.
\eqr
  $\dis   | \Ve |_{\dot{\widetilde{W}}^{s,p}([-A,A])}  $ is computed 
with the same notations and   $H=\eps^{-\gamma} h$,
\bqre\dis   | \Ve |^p_{\dot{\widetilde{W}}^{s,p}([-A,A])}  & =&  
           \dis  \eps^{(1-sp)\gamma} 
              \int_{-\Be}^{\Be}  \int_{-\Be}^{\Be} \frac{|v(X+H)-v(X)|^p}{|H|^{1+sp}}dX dH.       \eqre
 Let $Var(.)$ be the one periodic function bounded in $L^\infty$  by 
 $ 2^{p} \|v\|^p_{L^p([0,1])}$,
 \bqre 
 \ Var(H) & =& \dis \int_0^1|v(X+H)-v(X)|^p dX.
\eqre
Notice that $Var \equiv 0$ if and only if  $v$ is constant  a.e.

   Using one periodicity of $v$ with respect to $X$ yields 
 as in \eqref{lpel} 
\bqre\dis   | \Ve |^p_{\dot{\widetilde{W}}^{s,p}([-A,A])}  & =&  
           \dis  \eps^{- sp\gamma}
              \int_{-\Be}^{\Be} \left( 
     \eps^{\gamma}  \int_{-\Be}^{\Be} |v(X+H)-v(X)|^pdX
                              \right) \frac{dH}{|H|^{1+sp}},\\
         & \leq &  \dis  \eps^{- sp\gamma}
              \int_{-\Be}^{\Be} \left( 
                (2A+1) Var(H)
                              \right) \frac{dH}{|H|^{1+sp}}
             \leq  \eps^{-sp\gamma}   (2A+1)D^p_\infty,\\
   D^p_B=(D_B)^p &=&   \int_{-B}^{+B}   Var(H)  \frac{dH}{|H|^{1+sp}}. 
    \eqre
Notice that $D_B$ is a true constant related
 to the fractional derivative of $v$
 since for $B=1/2$,
  $\dis D_{1/2} = | v |_{\dot{\widetilde{W}}^{s,p}([-1/2,1/2])}$ and 
 for $B=\infty$ the integral converges.
\\
 The lower bound is obtained by  the same way and finally one has
\bqre 
 \dis   | \Ve |_{\dot{\widetilde{W}}^{s,p}([-A,A])}  & \leq & 
                \eps^{-s\gamma} (2A+1)^{1/p}D_\infty , \\ 
        | \Ve |_{\dot{\widetilde{W}}^{s,p}([-A,A])}  & \geq & 
                \eps^{-s\gamma} (2A-1)^{1/p}D_1 , \\
   | \Ve |_{\dot{\widetilde{W}}^{s,p}([-A,A])}  & \sim & 
                \eps^{-s\gamma} (2A)^{1/p}D_\infty.
\eqre
 Notice also that  $D_B>0$ for $B> 1/2$.
 Otherwise $D_B=0$  implies $Var \equiv  0$ a.e.
 which implies $v$ is  a constant function on $[x_0-2B,x_0+2B]$
 and on $\R$ by periodicity.
\\
A key point in this paper is the lower bound to get sharp estimates. 
Since $D_B$ is non decreasing with respect to $B$,  the previous lower bound 
of $\Ve$ in $W^{s,p}$ implies the following lower bound 
 \bqre
 | \Ve |_{\dot{\widetilde{W}}^{s,p}([-A,A])}  & \geq & 
                \eps^{-s\gamma} (2A-1)^{1/p} | v |_{\dot{\widetilde{W}}^{s,p}([-1/2,1/2])} 
.
\eqre
 With more work,  similar estimates are still valid for 
$ | \Ve |_{\dot{W}^{s,1}([-A,A])}$, see lemmas in \cite{CJR} about triangular changes of variables for oscillatory integrals. But it is enough for our purpose. 
\\
Same computations when $v$ is replaced by $\ve$ are still valid 
which conclude the proof.
\epro

The following lemma is useful to check that  $W^{s,1}$ semi-norms of 
$V: \R \mapsto \R$  and  $W:  \R^d \mapsto \R$ where 
$W(x_1,\cdots,x_d) = V(x_1)$  have the same order.
\begin{lem} \label{lemd1}
 Let $d \geq 2 $, $s>0 $, $A >0$,  $ h_1 >0$,  
  \bqr   
    \mu_{d,s}(h_1) = \dis 
               \int_{0}^{A} \cdots  \int_{0}^{A}
         \frac{h_1^{1+s}}{(h_1 + h_2 + \cdots + h_d)^{d+s}} dh_2 \cdots dh_d,
   \eqr
   then there exists two positive numbers $c_{d,s}, C_{d,s}$
     such that  
 \bqr 
   \label{Idh1}
   0  <  c_{d,s} \leq   \mu_{d,s}(h_1)   \leq  
      C_{d,s} < + \infty,    
 & \forall  A >0, \quad \forall h_1 \in ]0,A]. 
 \eqr 
Inequalities \eqref{Idh1} are still valid for $ h_1 \in ]0,2A]$ 
  with other constants:   \\
$ 0  <  \widetilde{c}_{d,s} \leq   \mu_{d,s}(h_1)   \leq  
      \widetilde{C}_{d,s} < + \infty$.
\end{lem}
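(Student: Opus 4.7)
The plan is to exploit a scaling symmetry that reduces $\mu_{d,s}(h_1)$ to a scale-invariant integral on a cube whose size depends only on the ratio $R := A/h_1$. Specifically, I would perform the change of variables $u_i = h_i/h_1$ for $i=2,\dots,d$, so that $dh_i = h_1\, du_i$ and the cube $[0,A]^{d-1}$ becomes $[0,R]^{d-1}$. A direct computation then yields
\begin{equation*}
\mu_{d,s}(h_1) \;=\; \int_{0}^{R}\!\!\cdots\!\!\int_{0}^{R} \frac{du_2\cdots du_d}{(1+u_2+\cdots+u_d)^{d+s}} \;=:\; I(R),
\end{equation*}
since the $h_1^{1+s}$ in the numerator, the $h_1^{d+s}$ factored out of the denominator, and the Jacobian $h_1^{d-1}$ exactly cancel. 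Thus the integral depends on $A$ and $h_1$ only through $R$.

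Next I would observe that $I$ is non-decreasing in $R$, so the two bounds can be obtained from evaluating $I$ at the endpoints of the relevant range of $R$. For $h_1\in(0,A]$ one has $R\in[1,\infty)$, giving the lower bound $I(R)\geq I(1)=:c_{d,s}$, which is strictly positive because the integrand is continuous and positive on the cube $[0,1]^{d-1}$. For the upper bound I would pass to the limit $R\to+\infty$ and check convergence of
\begin{equation*}
I(\infty) \;=\; \int_{0}^{\infty}\!\!\cdots\!\!\int_{0}^{\infty} \frac{du_2\cdots du_d}{(1+u_2+\cdots+u_d)^{d+s}}.
\end{equation*}
Setting $v = u_2+\cdots+u_d$ and using that the $(d-2)$-simplex $\{u_i\geq 0 : \sum u_i = v\}$ has $(d-2)$-Hausdorff measure proportional to $v^{d-2}$, this reduces to a one-dimensional integral $\int_0^\infty \frac{c\,v^{d-2}}{(1+v)^{d+s}}\,dv$, whose integrand behaves like $v^{-2-s}$ at infinity and is bounded near $0$; hence $I(\infty)=:C_{d,s}<+\infty$.

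Combining these yields $0<c_{d,s}\leq \mu_{d,s}(h_1)\leq C_{d,s}<+\infty$ uniformly in $A>0$ and $h_1\in(0,A]$, which is \eqref{Idh1}. For the second statement, $h_1\in(0,2A]$ only enlarges the range of $R$ to $[1/2,\infty)$, so it suffices to replace the lower constant by $\widetilde c_{d,s}:=I(1/2)>0$ while keeping $\widetilde C_{d,s}:=C_{d,s}$. There is no real obstacle here: the only subtlety is verifying integrability at infinity for $I(\infty)$, which is why the exponent $d+s$ (rather than any smaller exponent) is assumed, and the reduction via the simplex volume formula handles this cleanly.
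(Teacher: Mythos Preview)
Your proof is correct and takes a genuinely different route from the paper's. The paper first rescales by $A$ (setting $h_i = A t_i$) to reduce to the case $A=1$, and then computes $\mu_{d,s}(t_1)$ \emph{explicitly}: it introduces an auxiliary parameter $B$, derives the recursion $(j+s)\,\mu_{j+1}(t_1,B) = \mu_j(t_1,B) - \mu_j(t_1,B+1)$ from the elementary integral $\int_0^1(t+B)^{-(1+j+s)}\,dt$, solves the recursion to obtain the closed alternating-sum formula
\[
\mu_{d,s}(t_1) \;=\; \gamma_{d,s}\sum_{k=0}^{d-1}\binom{d-1}{k}(-1)^k \frac{t_1^{1+s}}{(t_1+k)^{1+s}},
\qquad \gamma_{d,s} = \frac{1}{(d-1+s)\cdots(1+s)},
\]
and then deduces the bounds from continuity on $]0,1]$ together with the nonzero right limit $\mu_{d,s}(0^+)=\gamma_{d,s}$. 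You rescale by $h_1$ instead, collapsing everything to the monotone one-parameter family $I(R)$ with $R=A/h_1$, and read off the bounds from $I(1)$ (resp.\ $I(1/2)$) and $I(\infty)$. Your argument is shorter, avoids the inductive computation, and makes the uniformity in $A$ transparent; the paper's approach, on the other hand, yields the explicit formula and the sharp value $C_{d,s}=\gamma_{d,s}$, which your method does not give (though the lemma does not require it).
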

The constants $c_{d,s}$ and $C_{d,s} $ are independent of $A >0$. 
Notice that there is a singularity for $\mu_{d,s}$ at $h_1=0$ since 
$\mu_{d,s}(0)=0$ and $\mu_{d,s} > 0$ on $]0,A]$.

\bpro
  It seems that  $\mu_{d,s}(h_1) $ is depending on $A$,
  $\mu_{d,s}(h_1) = \mu_{d,s}^A(h_1)  $. But by homogeneity
  the problem  is reduced to the case  $A=1$ with
 the change of variable $h_i = t_i A$, $0 < t_i < 1$.
\\
 Now $ \mu_{d,s}(t_1)= \mu_{d,s}^1(t_1)= \mu_{d,s}^A(h_1)$ is computed explicitly .
\\
 Let $\mu_{d,s}(h_1,B)$ be 
 $ \dis \int_{0}^{1} \cdots  \int_{0}^{1}
   \frac{t_1^{1+s}}{(t_1 + t_2 + \cdots + t_d + B)^{d+s}} dt_2 \cdots dt_d$
    for $d >1$,  $B \geq 0$. 
Notice that $\mu_{d,s}(t_1)=\mu_{d,s}(t_1,0)$.
\\   For $d=1$, set 
   $\mu_1(t_1,B) = \dis  \frac{t_1^{1+s}}{(t_1  + B)^{1+s}} $, 
      $\mu_1(t_1)= \ \mu_1(t_1,0) = 1 $.  
  The identity  
\bqre 
\dis \int_0^1 \frac{dt}{(t+B)^{(1+j+s)}}  & =& 
        (j+s)^{-1}\left (B^{-(j+s)} - (B+1)^{-(j+s)}  \right),
  \eqre
yields
$ 
 (j+s) \mu_{1+j}(t_1,B)    =  \mu_j(t_1,B) - \mu_j(t_1,B+1),  
$ 
and proceeding by induction with the notations
  $\dis   \gamma_{d,s}=  \frac{1}{(d-1 + s)\cdots (1+s)}$, 
 $\dis C_n^k = \frac{n!}{k! (n-k)!} $, 
\bqre
 \mu_{d,s}(t_1,B)  & =& 
     \dis \gamma_{d,s} 
\sum_{k=0}^{d-1} C_{d-1}^k (-1)^k \mu_1(t_1,B+ k ). 
\eqre 
Hence, for $B=0$,  
\bqre
 \mu_{d,s}(t_1)  &= &
     \dis  \gamma_{d,s}\sum_{k=0}^{d-1} C_{d-1}^k (-1)^k 
                                    \frac{t_1^{1+s}}{(t_1+k)^{1+s}}, 
\eqre
which gives $\mu_{d,s}(0+)= \gamma_{d,s} > 0 $.
Now, $\mu_{d,s}(.)$ belongs in $C^0(]0,1],\R^+)$, $\mu_{d,s}(.)$ is positive on $]0,1]$
 with a  positive right limit at $t_1=0$, thus positive constants stated 
in the lemma exist.  
\\
For instance when $d=2$, $C_{2,s}$ is $\gamma_{2,s}=1/(1+s)$ and 
$c_{2,s}= (1 -2^{-(1+s)})/(1+s)$, since $\mu_2$ is decreasing.
\\ 
Notice that $C_{d,s} \leq \gamma_{d,s}$  for all $d \geq 2$.
It suffices  to proceed by induction with  this inequality
$ 
\dis \int_0^1 \frac{dt}{(t+B)^{(1+j+s)}}  \leq  
        (j+s)^{-1}B^{-(j+s)}
 .$ 
But $\gamma_{d,s}$ is the right limit of $\mu_{d,s}$ at $t_1 = 0$. 
Then $C_{d,s} = \gamma_{d,s}$ which  concludes the proof for 
 $h_1 \in ]0,A]$. 
On $]0,2A]$ it suffices to take 
$0 < \widetilde{c}_{d,s}=\inf_{]0,2]} \mu_{d,s}$ 
 and $+ \infty > \widetilde{C}_{d,s}=\sup_{]0,2]} \mu_{d,s}$.
 \epro
\medskip \\    
Our examples of oscillating solutions  is related to the following key example. 
For instance $\Ve$  defined by $\ue = \eps \;  \Ve $  where $\ue$ is  the solution of 
\\
$ \partial_t (\ue) + \partial_x | \ue|^{1+\gamma}=0,$ $  \ue(0,x)= 0 + \eps \;  U(0,\eps^{-\gamma} x)$ 
satisfies the assumption of the lemma on a bounded time interval
(\cite{CJ1}).
\begin{lem}{\bf [Example of highly periodic oscillations on 
                  $[0,T] \times \R$]}
 \label{Losc21}
\alali
Let $T$, $\g$ be positive.
If $U$ belongs to $C^1([0,T]\times \R/\Z,\R)$ 
and non constant, 
then 
$\Ve(t,x)=  U(t,\eps^{-\gamma} x) \simeq \eps^{-s\g}$ in 
  $C^0([0,T], W^{s,p}_{loc}(\R))
 \cap   W^{s,p}_{loc}(]0,T[ \times \R) $.
\end{lem}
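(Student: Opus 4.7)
The plan is to exploit the structure $\Ve(t,x) = U(t, \eps^{-\gamma} x)$ as a $t$-parametrized family of one-variable highly oscillating functions and combine this with the uniform $C^1$ regularity in $t$. A crucial observation used throughout is that $\|\partial_t \Ve\|_{L^\infty([0,T] \times \R)} \leq \|\partial_t U\|_{L^\infty}$ independently of $\eps$, which will let us control all "time-difference" contributions by $|h_1|$.

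\textbf{Fiberwise estimates in $C^0([0,T], W^{s,p}_{loc}(\R))$.} For each fixed $t$, apply Lemma \ref{Losc1D} to $v(\theta) = U(t, \theta)$. The $C^1$ regularity of $U$ on $[0,T] \times \R/\Z$ provides uniform control on $\|U(t,\cdot)\|_{C^1(\R/\Z)}$, yielding the uniform upper bound $\|\Ve(t,\cdot)\|_{W^{s,p}([-A,A])} \leq C \eps^{-s\gamma}$. Non-constancy of $U$ supplies some $t_0 \in [0,T]$ with $U(t_0,\cdot)$ non-constant in $\theta$, and the lower part of Lemma \ref{Losc1D} at $t = t_0$ gives the matching lower bound, so that the $\sup_t$-norm satisfies $\Ve \simeq \eps^{-s\gamma}$.

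\textbf{Upper bound in $W^{s,p}_{loc}(]0,T[ \times \R)$.} On a product box $Q = ]a,b[ \times ]-A, A[ \subset ]0,T[ \times \R$, use
\[
|\Ve(t+h_1, x+h_2) - \Ve(t,x)|^p \leq 2^{p-1}\bigl(|\Ve(t+h_1, x+h_2) - \Ve(t, x+h_2)|^p + |\Ve(t, x+h_2) - \Ve(t,x)|^p\bigr).
\]
The time-difference term is pointwise bounded by $C|h_1|^p$, and its weighted integral against $(|h_1|+|h_2|)^{-(2+sp)}$ over $]-A,A[^2$ converges since $s<1$ makes $p(1-s)>0$. For the space-difference term, Fubini together with the elementary estimate $\int_{-A}^A (|h_1|+|h_2|)^{-(2+sp)}\, dh_1 \leq C|h_2|^{-(1+sp)}$ (in the spirit of Lemma \ref{lemd1}) reduces matters to $\int_a^b |\Ve(t,\cdot)|^p_{\dot{\widetilde{W}}^{s,p}([-A,A])}\, dt$, which is $O(\eps^{-sp\gamma})$ by the fiberwise estimate.

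\textbf{Lower bound in $W^{s,p}_{loc}(]0,T[ \times \R)$.} This is the main obstacle, because one cannot restrict the outer integration to the diagonal $h_1=0$, which has measure zero. Rearrange the $p$-triangle inequality into
\[
|\Ve(t+h_1, x+h_2) - \Ve(t,x)|^p \geq 2^{1-p}|\Ve(t, x+h_2) - \Ve(t,x)|^p - |\Ve(t+h_1, x+h_2) - \Ve(t, x+h_2)|^p.
\]
By continuity of $t \mapsto |U(t,\cdot)|_{\dot{\widetilde{W}}^{s,p}([-1/2,1/2])}$ (inherited from $U \in C^1$) and non-constancy of $U$, there is a subinterval $[t_1,t_2] \subset ]a,b[$ on which this quantity is bounded below by some $c_0 > 0$. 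Restricting the outer integration to $[t_1,t_2] \times ]-A,A[$ and applying the matching lower bound
\[
\int_{-A}^A \frac{dh_1}{(|h_1|+|h_2|)^{2+sp}} \geq \frac{c}{|h_2|^{1+sp}}, \qquad |h_2| \leq A,
\]
together with Lemma \ref{Losc1D} fiber by fiber, the main term is bounded below by $c'(t_2-t_1)\eps^{-sp\gamma}$. The subtracted error term is pointwise at most $C|h_1|^p$ and integrates to an $O(1)$ constant independent of $\eps$. For $\eps$ sufficiently small the oscillatory term dominates, producing the required $\eps^{-s\gamma}$ lower bound on $\|\Ve\|_{W^{s,p}(Q)}$ and completing the proof. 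The conceptual difficulty is precisely this last step: transferring a one-dimensional oscillatory lower bound into a space-time lower bound without losing the $\eps^{-s\gamma}$ scaling, which the rearranged $p$-triangle inequality combined with uniform $C^1$-in-$t$ control makes possible.
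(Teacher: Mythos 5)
Your proof is correct and follows the same overall architecture as the paper's: fiberwise estimates in $t$ via Lemma \ref{Losc1D} for the $C^0([0,T],W^{s,p}_{loc}(\R))$ part, then a four-fold integral with the decomposition into a space-difference term $Q$ and a time-difference term $R$, combined with the Fubini reduction through $\mu_{2,sp}$ (Lemma \ref{lemd1}). The upper-bound argument is essentially identical.

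The genuine difference is in the lower bound. The paper uses the convexity estimate $|Q+R|^p \geq |Q|^p - p|Q|^{p-1}|R|$, so the subtracted error term $IS=\int |\tau|\,|Q|^{p-1}/(|\tau|+|\xi|)^{2+sp}$ still carries the oscillation in $\xi$ through the factor $|Q|^{p-1}$; the paper then has to run a change of variables $X=x/\eta$, $\Xi=\xi/\eta$ with $\eta=\eps^\gamma$ and split into $|\Xi|<1$ and $|\Xi|>1$ to verify that $IS=O(1)$ (or $O(\ln\eta)$ when $sp=1$), which is strictly lower order than $IQ\simeq\eps^{-sp\gamma}$. You instead rearrange the $p$-triangle inequality as $|Q+R|^p \geq 2^{1-p}|Q|^p - |R|^p$; the error term $\int |R|^p/(|\tau|+|\xi|)^{2+sp}$ is then bounded exactly as the upper-bound term $IR$, by $|R|\leq \|\partial_t U\|_{L^\infty}|\tau|$ and the observation that $\int\!\int |\tau|^p (|\tau|+|\xi|)^{-(2+sp)}\,d\tau\,d\xi$ converges because $p(1-s)>0$, giving $O(1)$ at once. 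Your route avoids the change-of-variables analysis entirely and also sidesteps the logarithmic borderline case $sp=1$, at the modest cost of losing a harmless factor $2^{1-p}$ in the main term. Both conclude identically: for $\eps$ small, the main term $\eps^{-sp\gamma}$ dominates the $O(1)$ error, yielding the two-sided estimate $\Ve\simeq\eps^{-s\gamma}$ in $W^{s,p}_{loc}(]0,T[\times\R)$.
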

\begin{remark}\label{rinter}
Notice that the upper bound is quite easy to get.
It directly follows from the fact that $W^{s,p}$ is an interpolated 
 space of exponent $\theta=s$ between $L^p=W^{0,p}$ and $W^{1,p}$, \cite{TartarI}. 
 But we  also want  a lower bound to obtain an optimal estimate. 
 This is a very crucial point in our study.
 For this purpose we use the intrinsic semi-norm in the  proofs. 
 The computations are elementary but long.

 The same remark is still valid for all the next lemmas in this section. 
 \end{remark}
 
\bpro 
         First  the fractional derivative w.r.t. $\x$ is estimated. 
        Second the whole fractional derivative in $(t,\x)$ is obtained.
  \medskip  
  \\
\underline{Bounds in $L^\infty([0,T], W^{s,p}_{loc}(\R)) $}:
 There exists $t_0 \in ]0,T[$ such that $\theta \mapsto U(t_0,\theta)$
 is non constant 
since $U $ is non constant and continuous on $[0,T]\times\R/\Z$.
 For  $t_0$ fixed the sharp estimate is a consequence of Lemma \ref{Losc1D}.
 For another  $t$, we get the same order $\eps^{-s\gamma}$ or $\eps^{0}=1$. 
 Finally, constants involving in this estimate depend continuously of $t$
 so the   bound in $L^\infty([0,T], W^{s,p}_{loc}(\R)) $ is obtained.
 Since $U \in C^1$ this bound is automatically in  $C^0([0,T], W^{s,p}_{loc}(\R))$.
 \medskip 
  \\
\underline{Bounds in $ W^{s,p}_{loc}(]0,T[ \times \R)) $}:
The only problem is to  estimate for $x_0 \in \R$, 
$t_0 \in ]0,T[$ and 
$\min(t_0,T-t_0)> A>0$, the quadruple integral 
\bqre 
 \dis 
 IA
   & = &\dis |\Ve|^p_{\dot{\widetilde{W^{s,p}}}( [t_0 -A,t_0+A]\times [x_0-A,x_0+A])  } 
 \\ & = & 
   \int_{t_0-A}^{t_0+A} \int_{x_0-A}^{x_0+A} \int_{-A}^{A} \int_{-A}^{A}
  \frac{|U(t+\tau,\eps^{-\gamma} (x+\xi))-U(t,\eps^{-\gamma} x)|^p}{(|\tau|+|\xi|)^{2+sp}} d\xi d\tau dx dt. 
\eqre
{\it   Upper bound of $IA$:} \\
 Let $Num$ be the numerator of the previous fraction,
  $Q$ be $\dis U(t,\eps^{-\gamma} (x+\xi))-U(t,\eps^{-\gamma} x)$, 
$R$ be $\dis U(t+\tau,\eps^{-\gamma} (x+\xi))-U(t,\eps^{-\gamma}( x+\xi))$
then 
$ Num = |Q+R|^p \leq 2^{p-1} (|Q|^p+|R|^p)$.
\\
Previous inequality  implies $ IA \leq 2^{p-1} (IQ+ IR) $ with obvious notations.
\bqre 
 IQ  & =&  \int_{}^{} \int_{}^{} \int_{}^{} \int_{}^{}
  \frac{|U(t,\eps^{-\gamma} (x+\xi))-U(t,\eps^{-\gamma} x)|^p}{(|\tau|+|\xi|)^{2+sp}} d\xi d\tau dx dt, \\ 
  & =&  \int_{}^{}  \int_{}^{} \int_{}^{}
  \frac{|U(t,\eps^{-\gamma} (x+\xi))-U(t,\eps^{-\gamma} x)|^p}{|\xi|^{1+sp}}
       \mu_{2,sp}(\xi) d\xi dx dt,
\eqre
 with $\mu_{2,sp}(.)$  is defined in Lemma \ref{lemd1}.
 Using Lemmas \ref{Losc1D}, \ref{lemd1} yield  $IQ \simeq \eps^{-s\g}$.
\\
$IR$ is easily bounded since 
\bqre 
 IR  & =&  \int_{}^{} \int_{}^{} \int_{}^{} \int_{}^{}
  \frac{|U(t+\tau,\eps^{-\gamma} (x+\xi))-U(t,\eps^{-\gamma} (x+\xi))|^p}{(|\tau|+|\xi|)^{2+sp}} d\xi d\tau dx dt, \\ 
  & \leq &  \int_{}^{} \int_{}^{}  \ \int_{}^{} \int_{}^{}
  \frac{\|\partial_t U\|^p_{L^\infty} |\tau|^p}{(|\tau|+|\xi|)^{2+sp}}
       d\tau d\xi dx dt
\\ & \leq &  8 A^2\|\partial_t U\|^p_{L^\infty}
     \int_0^A |\tau|^{p(1-s) -1 } \mu_{2,sp}(\tau) d\tau,
\eqre
which is finite, 
so $IA \leq IQ+IR = \mathcal{O}(\eps^{-s p \g})$. 
\medskip \\
{\it Lower bound  of $IA$:} \\
 We again use notations $Q$, $R$, $Num$.
  By a convex inequality, the numerator satisfies:   
$Num= \dis |Q+R|^p  \geq |Q|^p - p |Q|^{p-1} |R| = |Q|^p - O( |\tau| |Q|^{p-1}) $  
 since $R = O(\tau)$.   
 Then  $IA \geq IQ -O( IS) $, 
where  $IQ$ has order  of $\eps^{-s p \g}$.
The term $IS$ has a lower order  as we can  find
 after the following  similar computations 
as in the proof of Lemma \ref{Losc1D}.
Notice first  that  for all positive  numbers $A$, $b$, 
 $\dis \int_0^A \frac{\tau}{(\tau +b )^{2+\beta}}d\tau \leq \frac{C}{2 b^\beta}$
 where $\beta >0$  and  
 $C=2 \int_0^{+\infty} \frac{\tau}{(\tau +1 )^{2+\beta}}d\tau < + \infty$.
 Now integrating on $\tau$ yields
\bqre 
 IS  & =&  \int_{}^{} \int_{}^{} \int_{}^{} \int_{}^{}
  \frac{ |\tau| |Q|^{p-1}}{(|\tau|+|\xi|)^{2+sp}} d\xi d\tau dx dt
     \leq  C \int_{}^{} \int_{}^{} \int_{}^{}
  \frac{  |Q|^{p-1}}{|\xi|^{sp}} d\xi  dx dt.
\eqre
We set $\eta = \eps^\g$, $X=x/\eta$, $\Xi=\xi/\eta$, the previous 
inequality becomes
\bqre 
 IS  & \leq &
       C \eta^{2-sp} \int_{0}^{T} \int_{A/\eta}^{A/\eta} \int_{A/\eta}^{A/\eta} 
  \frac{  |Q|^{p-1}}{|\Xi|^{sp}} d\Xi  dX dt.
\eqre
We now focus on the integral with respect to $\Xi$ and remark
that $Q=O(1)$ and also $Q=O(\Xi)$ since $U$ is $C^1$.
 \bqre 
 \int_{- A/\eta}^{A/\eta}  
  \frac{  |Q|^{p-1}}{|\Xi|^{sp}} d\Xi    
 &= &   \int_{|\Xi| <1}^{}  
  \frac{  |Q|^{p-1}}{|\Xi|^{sp}} d\Xi  + 
   \int_{1 <|\Xi| <A/\eta}^{}  
  \frac{  |Q|^{p-1}}{|\Xi|^{sp}} d\Xi \\
& \leq &
    \int_{|\Xi| <1}^{}  
  \frac{ O( |\Xi|^{p-1})}{|\Xi|^{sp}} d\Xi  + 
   \int_{1 <|\Xi| <A/\eta}^{}  
  \frac{  O(1)}{|\Xi|^{sp}} d\Xi    \\
 & \leq &
    \int_{|\Xi| <1}^{}  
   O( |\Xi|^{p(1-s)-1}) d\Xi  + 
    O( g(\eta))  =  O(1)+  O( g(\eta)),
 \eqre
 where $g(\eta)=  \eta^{sp-1}$ if $sp \neq 1$,else $ g(\eta)= \ln (\eta)$.
\\
To bound $IS$, we notice that the integral $\eta\int_{- A/\eta}^{A/\eta} dX $ 
is bounded by periodicity and we can take the supremum with respect $t$
 on $[0,T]$.
So $IS = O(1)$ if $sp\neq 1$ else $IS= O(\ln(\eta))$ 
which is enough to  have a lower order than $IQ$. 
\medskip \\
In conclusion, the bounds of $IA$ 
  yield 
  $ \Ve \simeq \eps^{-s  \g}$
 in  $W^{s,p}_{loc}([0,T] \times \R)$.
\epro

Now, we estimates  the Sobolev norm for  the multidimensional case with one phase.
\begin{lem}{\bf [Example of highly periodic oscillations on $\R^d$]}
 \label{LoscD1}
\alali
  Let $v$ belong to   $W^{s,p}_{loc}(\R, \R)$, 
$ \gamma > 0$, $\psi(\x) = \vv \cdot \x  + b $ where $\vv \in \R^d$, $b \in \R$ 
  and  $0 < \eps < 1 $, 
 \bqre \We(\x) & = &  \dis  v(\eps^{-\gamma}\psi(\x)).
 \eqre
If  $v$ is a {\rm non constant } periodic function and $\nabla \psi \neq 0$,
 then 
 \bqre 
    \We  & \simeq &   \eps^{- s \gamma}  \quad \mbox{ in } W^{s,p}_{loc}(\R^d,\R).
 \eqre
Furthermore,  when  functions $\ve$ are  one periodic function for all $\eps \in ]0,1]$,
  which converge towards $v$ in $C^1$ and 
 $ \We(\x)  =  \dis  \ve(\eps^{-\gamma}\psi(\x)) $, 
the conclusion holds true.
\end{lem}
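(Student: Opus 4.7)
The plan is to reduce this multidimensional statement to the one-dimensional Lemma \ref{Losc1D} via a linear change of variables, with the cross-variable coupling in the intrinsic seminorm handled by Lemma \ref{lemd1}.

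First, since $\psi$ is affine and $\vv = \nabla\psi \neq 0$, I pick an orthogonal matrix $R$ sending the direction $\vv/|\vv|$ onto $e_1$ and set $\x = R\y$. After this rotation $\psi$ reads $\psi(\y) = c\,y_1 + b$ with $c = |\vv|$, and $\We$ depends only on $y_1$. The cube $Q_d(\x_0,A)$ is mapped into a rotated cube that is squeezed between two axis-parallel cubes $Q_d(\y_0,A/\sqrt{d})$ and $Q_d(\y_0,A\sqrt{d})$; moreover the norm $|\h| = |h_1|+\cdots+|h_d|$ used in $\dot{\widetilde W}^{s,p}$ is equivalent to the Euclidean norm, so seminorms on original and rotated cubes are comparable up to multiplicative constants independent of $\eps$. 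I may therefore assume $\psi(\y) = c\,y_1 + b$ and $\We(\y) = V_\eps(y_1)$ where $V_\eps(y) = v(\eps^{-\gamma}(c y + b))$. A trivial rescaling and translation in $y$ show that $V_\eps \simeq \eps^{-s\gamma}$ in $W^{s,p}_{loc}(\R)$ by Lemma \ref{Losc1D}.

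The $L^p$ bound is then immediate by Fubini: $\|\We\|^p_{L^p(Q_d(0,A))} = (2A)^{d-1}\,\|V_\eps\|^p_{L^p(-A,A)}$, which already has the correct size (and is actually $\mathcal{O}(1)$). For the fractional part, I expand
\begin{eqnarray*}
|\We|^p_{\dot{\widetilde W}^{s,p}(Q_d(0,A))}
&=& \int_{Q_d(0,A)}\!\!\int_{Q_d(0,A)}
\frac{|V_\eps(x_1{+}h_1) - V_\eps(x_1)|^p}{(|h_1|+\cdots+|h_d|)^{d+sp}}\, d\x\, d\h,
\end{eqnarray*}
integrate $x_2,\dots,x_d$ out for a factor $(2A)^{d-1}$, split the $h_2,\dots,h_d$ integration into the $2^{d-1}$ sign sectors, and apply Lemma \ref{lemd1} with the parameter $sp$: for each fixed $h_1 \in (0,2A]$,
\begin{eqnarray*}
\int_{(0,A)^{d-1}}\!\frac{dh_2\cdots dh_d}{(h_1+h_2+\cdots+h_d)^{d+sp}}
& = & \frac{\mu_{d,sp}(h_1)}{h_1^{\,1+sp}} \;\asymp\; \frac{1}{h_1^{\,1+sp}}\,,
\end{eqnarray*}
with constants independent of $A$ and $\eps$. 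Collecting these pieces (and using symmetry in the sign of $h_1$) yields
\begin{eqnarray*}
|\We|^p_{\dot{\widetilde W}^{s,p}(Q_d(0,A))}
& \asymp & (2A)^{d-1}\,|V_\eps|^p_{\dot{\widetilde W}^{s,p}(-A,A)},
\end{eqnarray*}
so by Lemma \ref{Losc1D} applied to $V_\eps$ we conclude $\We \simeq \eps^{-s\gamma}$ in $W^{s,p}_{loc}(\R^d)$.

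The main technical obstacle is clearly this decoupling of the $h$-variables in the denominator of the intrinsic seminorm; it is precisely the role of Lemma \ref{lemd1} to give two-sided bounds that preserve both the upper and the lower estimates simultaneously, which is essential because I need an \emph{optimal} order and not just an upper bound. The converging-profile case $\ve \to v$ in $C^1$ follows by the same computation since the one-dimensional ingredient Lemma \ref{Losc1D} already handles such convergent families; the only adjustment is to note that the constants $D_B$ associated with $\ve$ converge to those of $v$, so the two-sided estimate is uniform for $\eps$ small enough.
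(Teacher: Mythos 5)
Your proposal is correct and follows essentially the same route as the paper's three-step argument: a linear change of variables to reduce to $\psi(\y)=cy_1+b$, then a dimension-reduction of the $\dot{\widetilde W}^{s,p}$ seminorm via Lemma~\ref{lemd1} to fall back on the one-dimensional Lemma~\ref{Losc1D}. The only cosmetic difference is that you take an orthogonal rotation (so the cube distortion is controlled by the clean $\sqrt d$ factors and the norm equivalence), whereas the paper allows a general non-degenerate matrix $M$ and tracks the constants $m_0$, $m_1$, $m_{-1}$, $r$, $R$ explicitly; both yield the required two-sided comparability of the seminorms, and the rest of the computation is identical.
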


\bpro
 The expounded proof has three steps.  
Let $M$ be a $d\times d$ 
 non-degenerate matrix  and $B \in \R^d$
such that   $X_1=\psi(\x)$ where  $X=(X_1,\cdots,X_d)=M\x + B$. 
 $M$ exists since $\vv \neq 0$.  
\\

\underline{Step 1}: When  $W(\x)=U(M\x +b)$ since  $\det M \neq 0$,
 $W$ and $U$ are the same order in $W^{s,p}_{loc}$. 
 More precisely, fix following positive constants 
$m_0=|\det M|>0$, 
 $m_{1}=\||M\|| =\sup\{|M\x|,\, |\x|=1 \}>0$, 
 $m_{-1}=\||M^{-1}\|| >0$, $0 < r < R$ such that  
 $Q_d(X_0,r) \subset M Q_d(\x_0,1) \subset Q_d(X_0,R)$ where $X_0=M\x_0 +B$.
 Performing the change of variables $X=M\x + B$, $Y=M\y + B$ yields
 for any $\x_0 \in \R^d$ and any $A >0$
\bqre  m_0^{-1} \| U \|_{L^{p}(Q_d(X_0,rA))} 
        & \leq   \| W \|_{L^{p}(Q_d(\x_0,A))}  \leq   
         &  m_0^{-1} \| U \|_{L^{p}(Q_d(X_0,RA))}  , \\
\frac{ m_0^{-2}}{m_{-1}^{(d+sp)}}    | U |_{\dot{W}^{s,p}(Q_d(X_0,rA))} 
    & \leq  | W |_{\dot{W}^{s,p}(Q_d(\x_0,A))}  \leq  &
  \frac{m_0^{-2}}{m_{1}^{-(d+sp)}}  | U |_{\dot{W}^{s,p}(Q_d(X_0,RA))}  .
\eqre
\underline{Step 2}: Assume $\psi(\x)= x_1$, 
  i.e. $W(\x)=W(x_1,\cdots,x_d)=w(x_1)$, 
 $x_0=\psi(\x_0)$, then   $W$ in $W^{s,p}_{loc}(\R^d)$
     and $w$ in $W^{s,p}_{loc}(\R)$ have the same order. 
  More precisely, elementary computations yield
 \bqre   
            \| W \|_{L^{1}(Q_d(\x_0,A))}   & =  &  
             (2A)^{d-1} \| w \|_{L^{1}(Q_1(x_0,A))}  , \\ 
      | W |_{\dot{\widetilde{W}}^{s,p}(Q_d(\x_0,A))}  &\leq  &
           (2A)^{d-1} C_{d,sp} | U |_{\dot{\widetilde{W}}^{s,p}(Q_1(x_0,A))} \\
       & \geq & (2A)^{d-1} c_{d,sp} | U |_{\dot{\widetilde{W}}^{s,p}(Q_1(x_0,A))} .
\eqre
The two last inequalities and constants come from Lemma \ref{lemd1} since
\bqre 
| W |_{\dot{\widetilde{W}}^{s,p}(Q_d(\x_0,A))}  & =& 
  \int_{Q_d(0,A)}\int_{Q_d(\x_0,A)} \frac{|w(x_1+h_1)-w(x_1)|}{|\h|^{d+sp}} d\x d\h \\
 & =& 
 (2A)^{d-1} \int_{-A}^A\int_{x_0-A}^{x_0+A} \frac{|w(x_1+h_1)-w(x_1)|}{|h_1|^{1+sp}} 
       \mu_{d,sp}(h_1)dx_1 dh_1.
\eqre
 
\underline{Step 3}:  By step 1,  
  $\We(\x)=\Ve(\eps^{-\gamma}\psi(\x)) \simeq 
   \Ve(\eps^{-\gamma}\x_1)$ in $W^{s,p}_{loc}(\R^d)$,
 by  step 2,   $ \x \mapsto \Ve(\eps^{-\gamma}\x_1)$ 
       and   $ x_1 \mapsto \Ve(\eps^{-\gamma}\x_1)$ 
 have   the same 
 order in $W^{s,p}_{loc}(\R^d)$ and $W^{s,p}_{loc}(\R)$.
Finally  we have by   Lemma \ref{Losc1D} $\We \simeq \eps^{-s\gamma}$
 in $W^{s,p}_{loc}(\R^d)$.
\epro

It is the last step
to estimate the Sobolev norm for the multidimensional case  before proving Theorem \ref{Propse}.
\begin{lem}{\bf [Example of highly periodic oscillations on $[0,T]\times\R^d$]}
 \label{LoscD2}
\alali
  Let $U$ belong to   $W^{s,p}_{loc}(\R, \R)$, 
$ \gamma > 0$, $\varphi(t,\x) = \vv \cdot \x  + b \, t $ 
where $\vv \in \R^d$, $b \in \R$ 
  and  $0 < \eps < 1 $, 
 \bqre \We(\x) & = &  \dis  U(t,\eps^{-\gamma}\varphi(t,\x)).
 \eqre
If  $U$ is a {\rm non constant } function  in 
 $C^1([0,T]\times\R/\Z,\R)$ and $\vv \neq 0_{\R^d}$,
 then 
 \bqre 
    \We  & \simeq &   \eps^{- s \gamma}  
\quad \mbox{ in } W^{s,p}_{loc}([0,T]]\times\R^d,\R).
 \eqre
Furthermore,  when  $\Ue$ belongs to $C^1([0,T]\times\R/\Z,\R)$ for all $\eps \in ]0,1]$
 converging towards $U$ in $C^1$ and 
 $ \We(\x)  =  \dis  \Ue(t,\eps^{-\gamma}\varphi(t,\x)) $, 
the conclusion holds true.
\end{lem}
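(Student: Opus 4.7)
The plan is to reduce the $(d+1)$-dimensional estimate on $[0,T]\times\R^d$ to the two-dimensional situation already handled in Lemma~\ref{Losc21}, mimicking the three-step strategy of Lemma~\ref{LoscD1} with one extra variable (time) and one extra change of variables (to absorb $b\,t$ into the first spatial coordinate).

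First, I would linearize the spatial part of the phase. Since $\vv\neq 0_{\R^d}$, I complete $\vv$ into a basis and pick an invertible $d\times d$ matrix $M$ whose first row is $\vv$. Setting $\y = M\x$, the function becomes
\bqre
\widetilde{\We}(t,\y) := \We(t,M^{-1}\y) = U\bigl(t,\eps^{-\gamma}(y_1 + b\,t)\bigr),
\eqre
so its spatial dependence involves only the coordinate $y_1$. As in Step~1 of Lemma~\ref{LoscD1}, this bi-Lipschitz change of variables sandwiches cubes between cubes up to uniform constants depending only on $M$, hence preserves orders in $W^{s,p}_{loc}$.

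Next, I would collapse the $(y_2,\ldots,y_d)$ directions. Since $\widetilde{\We}$ is independent of $(y_2,\ldots,y_d)$, the increment $|\widetilde{\We}(t+\tau,\y+\h)-\widetilde{\We}(t,\y)|^p$ appearing in the intrinsic semi-norm does not depend on the transverse variables $(y_j,h_j)_{j\geq 2}$. Integrating them out against the denominator $(|\tau|+|\h|)^{d+1+sp}$ produces a $\mu_{d+1,sp}$-type factor bounded above and below by positive constants via Lemma~\ref{lemd1} (this is Step~2 of Lemma~\ref{LoscD1} with one added variable). The problem reduces to estimating $V_\eps(t,y_1) := U(t,\eps^{-\gamma}(y_1 + b\,t))$ in $W^{s,p}_{loc}([0,T]\times\R)$.

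Finally, I would absorb the linear $t$-dependence of the phase via the space-time change of variables $\Phi:(t,y_1)\mapsto(\tau,\xi_1) = (t,y_1+b\,t)$. Since $\Phi$ is linear with unit Jacobian, hence bi-Lipschitz, it maps axis-aligned cubes onto parallelograms of comparable size; as in Step~1 it preserves $W^{s,p}_{loc}$ orders. In the new coordinates $V_\eps\circ\Phi^{-1}(\tau,\xi_1) = U(\tau,\eps^{-\gamma}\xi_1)$, which is precisely the form covered by Lemma~\ref{Losc21}, giving $V_\eps \simeq \eps^{-s\gamma}$. Chaining the three reductions yields $\We \simeq \eps^{-s\gamma}$ in $W^{s,p}_{loc}([0,T]\times\R^d)$, and the ``furthermore'' part for $\Ue \to U$ in $C^1$ follows verbatim, the uniform $C^1$ convergence allowing all constants hidden in ``$\simeq$'' to be taken uniform in small $\eps$ exactly as in Lemmas~\ref{Losc1D} and~\ref{LoscD1}. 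I expect the main obstacle to be the lower bound in the last step: cubes in $(t,y_1)$ become tilted parallelograms in $(\tau,\xi_1)$ and one must sandwich them by axis-aligned cubes to transfer the sharp lower bound of Lemma~\ref{Losc21} back, a mechanism identical in spirit to Step~1 of Lemma~\ref{LoscD1} but now also involving the time variable.
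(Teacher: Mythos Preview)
Your proposal is correct and follows essentially the same route as the paper: linearize the phase, collapse the transverse spatial directions via Lemma~\ref{lemd1}, and reduce to Lemma~\ref{Losc21}. The only cosmetic difference is that the paper performs your first and third steps in one shot, using the time-dependent spatial change $(t,\x)\mapsto(t,\y)$ with $y_1=\varphi(t,\x)$ directly (noting that the constants $m_0,m_1,m_{-1}$ of Step~1 in Lemma~\ref{LoscD1} are uniform for $t\in[0,T]$), whereas you split this into a purely spatial change $\y=M\x$ followed by the shear $(t,y_1)\mapsto(t,y_1+bt)$; the paper also briefly extends $U$ to $[-\delta,T+\delta]\times\R/\Z$ to pass from the open strip to the closed one, a point you may want to add.
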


\bpro
We proceed as  in the previous proofs. 
First with a linear change of variable 
$(t,\x) \mapsto (t,\y)$ with $\y_1=\varphi(t,\x)$. 
$\We$ has the same estimates than 
 $\Ve= U(t,\eps^{-\gamma}y_1) $ in $W^{s,p}_{loc}(]0,T[\times\R^d,\R)$. 
Notice that the change of variable depends on $t$
varying in the compact set  $[0,T]$. So we have uniform estimates of positive constants $m_0$, $m_1$, $m_{-1}$ used in  the proof of Lemma \ref{LoscD1}.
\\
Now, the estimates of $\Ve$ in  $W^{s,p}_{loc}(]0,T[\times\R^d,\R)$
and in $W^{s,p}_{loc}(]0,T[\times\R,\R)$ have the same order since
\bqre 
& \dis  \int_{-A}^A \cdots \int_{-A}^A 
\frac{dh_0 dh_1\cdots dh_d }{(|h_0| + |h_1|+ \cdots |h_d|)^{1+d+sp}}
\\= &\dis  \int_{-A}^A \cdots \int_{-A}^A 
  \frac{dh_0 dh_1 }{(|h_0| + |h_1|)^{2+sp}}
\frac{(|h_0|+|h_1|)^{1+(sp+1)} dh_2\cdots dh_d }
     {(|h_0| + |h_1|+ \cdots |h_d|)^{d+(sp+1)}}
 \\= &\dis  \int_{-A}^A  \int_{-A}^A 
  \frac{dh_0 dh_1 }{(|h_0| + |h_1|)^{2+sp}}
     \mu_{2,(sp+1)} (|h_0|+|h_1|) 
\eqre
where $h_0$ plays the role of time. 
From the bounds of $\mu_{2,(sp+1)} (|h_0|+|h_1|) $  on $] 0,2A]$
 see Lemma \ref{lemd1},
 we can conclude with 
 Lemma \ref{Losc21}
\\
With a smooth extension of  $U$ on $[-\delta,T+\delta]\times \R/\Z$, for 
 a small positive $\delta$, we obtain 
 estimates in $W^{s,p}_{loc}([0,T]\times\R^d,\R)$.
\epro

We are now able to prove the Theorem by using Lemma \ref{LoscD1} and the method of characteristics.
\\

{\bf Proof of Theorem \ref{Propse}:}
\underline{Bounds $L^\infty([0,T_0],W^{s,p}_{loc}(\R^d) $}:  
Such bounds give bounds  in  $C^0([0,T_0],W^{s,p}_{loc}) $ since $\ue$ is in $C^1$.

For $t=0$, it is only an application of Lemma \ref{LoscD1}. 
The profile $U(t,.)$ is non constant for each $t$, else $U_0$ must be constant 
 by the method  of characteristics.  
 And the estimates are uniform.
\medskip \\
 \underline{Bounds in  $W^{s,p}_{loc}([0,T_0]\times \R^d) $}
The semi-norms $|.|_{\dot{\widetilde{W}}^{s,p}(Q_{d+1}(\y_0,A))}$, 
where $\y_0=(t_0,\x_0)$,  needs some precautions to use on 
 $[0,T_0]\times \R^d $. $\y_0$ must be such that $0 < t_0 < T_0$
 and $ A < \min(t_0,T_0 - t_0) $. Furthermore, 
only $W^{s,p}_{loc}(]0,T_0[\times \R^d)$ smoothness can be estimate. 
Indeed, $(\ue)_{0<\eps \leq 1} $ is bounded in  $W^{s,p}_{loc}([0,T_0]\times \R^d)$.
To prove this, let us use the following trick.
By the methods of characteristics the family of solutions 
$(\ue)_{0<\eps \leq 1} $ exists on a maximal time interval
   $]-\delta, T_1[ $, with  $0 < \delta < T_0 < T_1 $. 
 Notice that solutions exist for negative time since the initial data is smooth.
Now  estimates in $W^{s,p}_{loc}(]-\delta,T_1[\times \R^d)$ will be obtained
 which is sufficient  to get smoothness in 
 $W^{s,p}_{loc}([0,T_0]\times \R^d)$.
 Now using lemma \ref{LoscD1} completes the proof.
\epro 

\section{Super critical geometric optics} \label{scgo}
\smallskip

Now we can exhibit the supercritical geometric optics and  some implications about 
the maximal smoothing effect for solutions of conservation laws with $L^\infty$ initial data.

\subsection{ Propagation of highest frequency waves }
\alali

In Theorem \ref{Tpgamma} we saw that the frequencies of waves are related 
to an orthogonality condition  between the phase gradient and the derivatives of the flux. 
Theorem \ref{thmal} tell us where the flux reach is maximal degeneracy 
 and which direction the phase gradient has to be chosen. 
Thus we can build  a geometric optics  expansion with the highest frequencies. 
The  uniform Sobolev estimates of such family of oscillating solutions highlight the conjecture about the maximal smoothing effect below.

\begin{thm}{\bf[Bound of the maximal smoothing effect]} 
\label{thsgo}
 \alali
  Let $\F$  be a nonlinear flux which belongs to $C^\infty([-M,M],\R^d)$. 
 Let $\al_{sup}$ be the sharp measurement of the flux non-linearity.
   Then there exist  a constant $\underline{u} \in [-M,M]$, a time $T_0>0$, 
    and a sequence of initial data  
   $(u_0^\eps)_{0<\eps<1}$  such that 
   $ 
      \|u_0^\eps -\underline{u}\|_{L^\infty(\R^d)} < \eps, 
    $ 
 and 
 the sequence of entropy solutions  $(\ue)_{0<\eps<1}$
associated with  conservation law \eqref{1.1}
 satisfying: 
%
\begin{itemize}
\item 
   for  all  
$  s \leq \alpha_{\sup} $, 
the sequence  $(\ue)_{0<\eps<1}$ is uniformly bounded
 \\  in $ 
   W^{s,1}_{loc}([0,T_0]\times \R^d) \cap  C^0([0,T_0],W_{loc}^{s,1}(\R^d)) $,   
 \item 
for all $s>\alpha_{\sup}$,
 the sequence  $(\ue)_{0<\eps<1}$ is  {\bf unbounded}
  \\ in 
  $ W^{s,1}_{loc}([0,T_0]\times \R^d) $  and in $  C^0([0,T_0],W_{loc}^{s,1}(\R^d))$.
\end{itemize}
\end{thm}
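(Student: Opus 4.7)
The plan is to read the critical geometry off Theorem \ref{thmal}, propagate the corresponding oscillations via Theorem \ref{Tpgamma}, and measure them by Theorem \ref{Propse}. The argument is essentially a careful assembly of these three tools; all the conceptual content lies in choosing $\underline{u}$, the phase direction $\vv$, and the frequency $\gamma$ dictated by the maximal degeneracy of $\F$.

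First, by Theorem \ref{thmal} the integer $d_\F = 1/\alpha_{\sup}$ is finite and there exists $\underline{u} \in [-M,M]$ with $d_\F[\underline{u}] = d_\F$. By the definition \eqref{defmu} of $d_\F[\underline{u}]$, the family $\{\a'(\underline{u}),\dots,\a^{(d_\F - 1)}(\underline{u})\}$ does not span $\R^d$, so I can pick a unit vector $\vv \in \R^d$ orthogonal to each $\a^{(k)}(\underline{u})$ for $k = 1,\dots,d_\F - 1$. Set $\gamma := d_\F$, which is an integer and thus places me in the case $q = \gamma$ of Theorem \ref{Tpgamma}. Fix any nonconstant $U_0 \in C^1(\R/\Z,\R)$ with $\|U_0\|_\infty < 1$, for example $U_0(\theta) = \tfrac12 \sin(2\pi \theta)$, and define the initial data
\[
  u_0^\eps(\x) := \underline{u} + \eps\, U_0\!\left(\frac{\vv \cdot \x}{\eps^\gamma}\right), \qquad \|u_0^\eps - \underline{u}\|_{L^\infty(\R^d)} < \eps.
\]
Note that since $\vv$ is nonzero and orthogonal to $\a^{(1)},\dots,\a^{(d_\F-1)}$ but cannot be orthogonal to $\a^{(d_\F)}$ (otherwise $\vv$ would be perpendicular to a full basis), the coefficient $b$ in \eqref{eqUpgamma} is nonzero and the profile equation is genuinely nonlinear.

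The orthogonality conditions \eqref{Hgamma} hold by construction for $k = 1,\dots, q-1$, so Theorem \ref{Tpgamma} furnishes a common time $T_0 > 0$ on which the entropy solutions $\ue$ belong to $C^1([0,T_0] \times \R^d)$ and admit the expansion $\ue = \underline{u} + \eps\, U(t, \phi/\eps^\gamma) + \mathcal{O}(\eps^2)$ in $C^1$. The profile $U(t, \cdot)$ stays nonconstant for all $t \in [0, T_0]$, since the method of characteristics applied to \eqref{eqUpgamma} preserves nonconstancy as long as no shock has formed. Theorem \ref{Propse} applied with $p = 1$ then gives, precisely at the critical exponent $s = 1/\gamma = \alpha_{\sup}$, both uniform boundedness in $C^0([0,T_0], W^{s,1}_{loc}) \cap W^{s,1}_{loc}([0,T_0] \times \R^d)$ and unboundedness for every $s > \alpha_{\sup}$. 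The uniform bound for $s < \alpha_{\sup}$ is inherited from the critical case through the local embedding $W^{\alpha_{\sup}, 1}(Q) \hookrightarrow W^{s,1}(Q)$ on bounded cubes, combined with the uniform $L^\infty$ bound on $\ue$.

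The point requiring genuine care, which I expect to be the real obstacle, is the lower-bound half of Theorem \ref{Propse}: the $W^{s,1}_{loc}$ seminorm must blow up for the \emph{exact} entropy solution $\ue$ and not merely for the formal ansatz $\underline{u} + \eps\, U(t, \phi / \eps^\gamma)$. This is already settled inside the proof of Theorem \ref{Propse} via the intrinsic Gagliardo semi-norm computations of Lemmas \ref{Losc1D}--\ref{LoscD2}: the leading oscillating term contributes an exact order $\eps^{1-s\gamma}$ in $W^{s,1}_{loc}$, whereas the $C^1$ remainder $\mathcal{O}(\eps^{1+r})$ with $r > 0$ contributes only $\mathcal{O}(\eps^{1+r})$, which is strictly negligible as soon as $s > 1/\gamma$. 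Granted this cancellation of remainders, the theorem follows as a direct composition of Theorems \ref{thmal}, \ref{Tpgamma}, and \ref{Propse}.
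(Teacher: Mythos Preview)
Your proof is correct and follows essentially the same route as the paper: select $\underline{u}$ via Theorem \ref{thmal}, choose $\vv$ in the orthogonal complement of $\{\a'(\underline{u}),\dots,\a^{(d_\F-1)}(\underline{u})\}$, set $\gamma=d_\F$, invoke Theorem \ref{Tpgamma} for the smooth oscillating solutions, and conclude with Theorem \ref{Propse}. Your write-up is in fact more complete than the paper's own sketch, since you make explicit the strict inequality $\|U_0\|_\infty<1$ needed for $\|u_0^\eps-\underline{u}\|_{L^\infty}<\eps$, the embedding argument for $s<\alpha_{\sup}$, and the reason the $C^1$ remainder does not spoil the lower bound.
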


\bpro
  The proof is a   consequence of previous theorems.
  By Theorem \ref{thmal}, there exists $\underline{u} \in [-M,M]$ such that 
 $\al =  \frac{1}{d_F[\underline{u}]}$.  Let $U_0$ be a  non constant smooth periodic function 
  such that: 
 $-M \leq \underline{u} + U_0(\theta) \leq M$ for all $\theta$. 
 
 Let $\vv \in \R^d$ such that $\a^{k}(\underline{u}) \centerdot v = 0 $ and $\vv \neq 0$  
for $k=1,\cdots, d_\F[\underline{u}]-1$. Such $\vv$ exists by Definition of $d_\F[\underline{u}]$.

Now, let $(u_\eps)$ be the family of smooth solutions given by Theorem \ref{Tpgamma}.
Theorem \ref{Propse}  is the desired conclusion. 
\epro

\subsection{Highlight of the  Lions, Perthame, Tadmor conjecture} \label{sconj}
\alali

Let us recall the introductory  section \ref{ssse} and use the notations therein.
In \cite{LPT}, 
 the authors obtained  a kinetic formulation of conservation law \eqref{1.1}
 and  used averaging lemmas.  
 With only initial data uniformly bounded in  $L^\infty$, they
  proved  an {\it uniform}  smoothing effect in $W_{loc}^{s,1}$ for all  positive time. 
Thus the best uniform smoothing effect in Sobolev spaces   is at least  (\cite{LPT}): 
\bqre \dis  \frac{\alpha_{\sup}}{2 +\alpha_{\sup}} & \leq &  s_{\sup}.  \eqre 
Theorem \ref{thsgo} gives  an upper  bound for the $W^{s,1}-$regularizing effect  \eqref{goal}:
 \bqre s_{\sup} & \leq  & \al_{\sup}, \eqre 
Indeed,    let us denote  $ \mathcal{B}^\infty(\underline{u},\rho))= 
 \{ u \in L^\infty(\R^d,\R),\; \|u - \underline{u}\|_{L^\infty(\R^d,\R)} < \rho  \}$ and 
$\mathcal{S}_t$ the semi-group associated with  conservation law  \eqref{1.1}.     
Theorem \ref{thsgo}  proves that  
 for  a well chosen   $\underline{u} \in [-M,M]$, there exists $T_0>0$,
 such that for all $\rho >0$ and    for all $ 0 < t < T_0$, 
 $\mathcal{S}_t(\mathcal{B}^\infty(\underline{u},\rho))$ is not a {\it  bounded } subset 
 of $W^{s,1}_{loc}(\R^d_\x)$  for all $s> \al_{\sup}$.

 This result yields some remarks.
\begin{remark} {\it  Optimality for large dimension or large nonlinear degeneracy.}
 \\
By Theorem \ref{thmal},  
$\al_{\sup} \leq \dfrac{1}{d}$ , so   for large dimension $\al_{\sup}$  is small. 
Using a better  lower bound of $s_{\sup}$  from \cite{TT} we have:  
\bqre 
 \dis    \frac{\alpha_{\sup}}{ 1 + 2\; \alpha_{\sup}}  
\leq  s_{\sup}\leq \al_{\sup} \leq \dfrac{1}{d}. 
\eqre 
Thus for  large dimension  ($d >>1$)  or large nonlinear degeneracy ($\alpha_{\sup} <<1$)
we have asymptotically the right $s_{\sup} \sim  \alpha_{\sup} . $ 
\end{remark}

\begin{remark}{\it  In  $W^{s,p}$, $1 < p < + \infty$,} our geometric optics  expansion shows that   $s_{\sup}^p \leq \al_{\sup}$
 by  Theorem \ref{Propse}, where $s_{\sup}^p$ denotes the maximal uniform smoothing effect   in $W^{s,p}$.  
In other words our example is not related to the parameter $p$.   Other examples show  the importance of the  parameter $p$ 
in remark \ref{rind0}. 
\end{remark}

 \begin{remark} \label{rind0} {\it  Critical entropy solutions in the one dimensional case.}      
\\ 
 In \cite{DW,CJ1} special initial data $u_0(x) \in W^{s,p}(\R)$ are built,  
 in \cite{DW}   a piecewise smooth initial data and in \cite{CJ1} a continuous oscillating initial data.
The entropy solution $u(t,x)$ preserves this smoothness  at least on a bounded  time interval, indeed,   before the waves  interactions.
The regularity $s< \alpha_{\sup}$ can be choose  as close as we want to $\alpha_{\sup}$. 
 Furthermore the parameter $p$ is related to the parameter $s$:  $p = \dfrac{1}{s}$. 
Indeed, we cannot expect a greater parameter $p$ since $W^{s,p}(\R) \subset C^0(\R)$ for $p > \dfrac{1}{s}$.  
It would  be interesting to construct such solutions with the almost critical regularity in the multidimensional case.  
\end{remark}
 \begin{remark} \label{rind} {\it  Fractional $BV$ spaces.}      
\\ 
Recently in \cite{CJ1,BGJ6}, for  the one dimensional case and for all   nonlinear  degenerate convex fluxes,
conjecture \eqref{conjLPT} is reached.  
 Furthermore, entropy solutions satisfy  a new one-sided Holder condition. 
 For this purpose,   new spaces are introduced in the framework of conservation laws: 
the fractional $BV$ spaces  $BV^s $.
 $BV^s$ functions have  a structure similar to the one of $BV$ maps,   for all   $s \in ]0,1]$.  
$BV^s$  spaces seems to be  natural spaces to capture
  the regularizing effect for one dimensional scalar conservation laws. 
\end{remark}
This last  promising remark concludes this paper. 
\\

\bigskip

{\bf Acknowledgments.} The author   thanks 
   Florent Berthelin, Yann Brenier, Gui-Qiang Chen, David Chiron, George Comte,  Camillo De Lellis, Pierre-Emmanuel Jabin,  Michel Merle, 
   Beno\^{\i}t Perthame, 
  Michel Rascle, Jeffrey Rauch
   and Eitan Tadmor for fruitful
discussions  related to this subject.
 The author is greatly indebted to anonymous referees for improving the paper.

\medskip
     
\end{document}